\documentclass[a4paper]{IEEEtran}
\hyphenpenalty 400

\usepackage{amsmath,amsthm,amsfonts,amssymb}
\usepackage{tikz}
\usepackage{enumitem}
\usepackage{newfloat}
\usepackage[hidelinks,bookmarks=false]{hyperref}
\hypersetup{
   pdfauthor={E.P.Csirmaz and L.Csirmaz},
   pdftitle={Attempting the impossible: enumerating extremal submodular functions for n=6},
   bookmarksopen=false,
   pdfkeywords={vertex enumeration, supermodular function},
}
\bibliographystyle{plain}

% ==========================================================
\makeatletter
\def\correcthref{\hyper@anchor{\@currentHref}}
\makeatother
%%%%%%%%%%%%%%%%%%%%%%%%%%%%%%%%%%%%%%%%%%%%%%%%%%%%%%%%%%%%%%%%%%%%%
%% should redefine (from latex.ltx) the command \@citex
%% this is the IEEE standard;
%% for [5], [6] => [5,6]
%%

\newcommand\R{\mathbb R}
\newcommand\MOD{\mathsf{MOD}}
\newcommand\setm{\smallsetminus}
\def\c{\complement}
\let\phi\varphi

\newcommand\matrixrestr[2]{#1[#2]}
\newcommand{\mof}[1]{\matrixrestr{M}{#1}}
\newcommand\mofr{\matrixrestr{M}{r}}
\newcommand\mofrp{\matrixrestr{M}{r'}}
\newcommand\mpofrp{\matrixrestr{M'}{r'}}
\newcommand\zerovec{\mathbf{0}}

\newcommand\tailo{t-opt}
\newcommand\tailorder{\tailo{} order}
\newcommand\tailinsorder{\tailo{} insertion order}
\newcommand\Tailinsorder{T-opt insertion order}

\renewcommand\;{\tmspace+{2mu}{.125em}}

\let\hat\widehat
\newcommand\halfg{\raisebox{-1.2pt}{\textonehalf}\kern0.3pt}
\newcommand\rstd{\mathrel{\prec}}
\DeclareMathOperator\REC{\mathsf{SUB}}

\newtheorem{theorem}{Theorem}
\newtheorem{claim}[theorem]{Claim}

\theoremstyle{definition}
\newtheorem{definition}{Definition}
%==================================================================
%% pseudo code within the {pseudocode} floating environment
\usepackage[noEnd=false,indLines=true,beginComment=~,commentColor=black!90,
   beginLComment=--~,endLComment=]{algpseudocodex}
% set guide line style
\tikzset{algpxIndentLine/.style={draw,line width=0.5pt,color=gray!60}}
\makeatletter
\algnewcommand\algorithmicforeach{\textbf{for each}}
\algdef{S}[FOR]{ForEach}[1]{%
        \algpx@startIndent\algpx@startCodeCommand\algorithmicforeach\ #1\ \algorithmicdo%
}
\pretocmd{\ForEach}{\algpx@endCodeCommand}{}{}
\algdef{SE}[LOOP]{Loop}{EndLoop}{%
       \algpx@startIndent\algpx@startCodeCommand\algorithmicloop%
}{%
        \algpx@endIndent\algpx@startEndBlockCommand\algorithmicend%
}
\algdef{SE}[IF]{If}{EndIf}[1]{%
        \algpx@startIndent\algpx@startCodeCommand\algorithmicif\ #1\ \algorithmicthen%
}{%
        \algpx@endIndent\algpx@startEndBlockCommand\algorithmicend%
}
\algdef{SE}[FOR]{For}{EndFor}[1]{%
        \algpx@startIndent\algpx@startCodeCommand\algorithmicfor\ #1\ \algorithmicdo%
}{%
        \algpx@endIndent\algpx@startEndBlockCommand\algorithmicend%
}
\algdef{SE}[WHILE]{While}{EndWhile}[1]{%
    \algpx@startIndent\algpx@startCodeCommand%
    \algorithmicwhile\ #1\ \algorithmicdo%
}{%
     \algpx@endIndent\algpx@startEndBlockCommand\algorithmicend}%

\makeatother
\algrenewcommand\alglinenumber[1]{\footnotesize #1~}% no colon after these numbers
\DeclareFloatingEnvironment[]{code}
\newenvironment{pseudocode}[1]
{\begin{code}[!thb]%
  \hrule\vskip-2pt
  \caption{\fontsize{9}{10}\selectfont #1}%
  \vskip 2pt
  \hrule\hbox{}\begin{algorithmic}[1]\ignorespaces}
{\end{algorithmic}
  \vskip 3pt \hrule
  \end{code}}

%==================================================================
\newenvironment{itemz}[1][3pt]{%
\setitemize{topsep=#1,noitemsep,leftmargin=1.5\parindent,labelwidth=1.2\parindent,labelsep=3pt,align=parleft}%
\begin{itemize}}{\end{itemize}}

%=================================================================
\newenvironment{Keywords}[1]{\IEEEkeywords}{\endIEEEkeywords}

\title{\bfseries\fontsize{17.4}{19}\selectfont
 Attempting the impossible: enumerating extremal submodular functions for n=6
}

\newcommand{\rot}[3]{#3#2#1} %
\author{\fontsize{12.4}{12}\selectfont
Elod P. Csirmaz\IEEEauthorrefmark1
\thanks{\IEEEauthorrefmark1e-mail: \rot{\rot{maz.}{csir}{ep}com}{@}{elod},
\space R\'enyi Institute, Budapest}
and
Laszlo Csirmaz\IEEEauthorrefmark2
\thanks{\IEEEauthorrefmark2e-mail: csirmaz@renyi.hu,
\space UTIA, Prague and R\'enyi Institute, Budapest}}

\begin{document}
\maketitle

\begin{abstract}
Enumerating the extremal submodular functions defined on subsets of a fixed
base set has only been done for base sets up to five elements. This paper
reports the results of attempting to generate all such functions on a
six-element base set. Using improved tools from polyhedral geometry, we have
computed 360 billion of them, and provide the first reasonable estimate of
their total number, which is expected to be between 1,000 and 10,000 times
this number. The applied Double Description and Adjacency Decomposition
methods require an insertion order of the defining inequalities. We
introduce two novel orders, which speed up the computations significantly,
and provide additional insight into the highly symmetric structure of
submodular functions. We also present an improvement to the combinatorial
test used as part of the Double Description method, and use statistical
analyses to estimate the degeneracy of the polyhedral cone used to describe
these functions. The statistical results also highlight the limitations of
the applied methods.

\begin{Keywords}{Keywords}
Vertex enumeration; double description method;
submodular functions.
\end{Keywords}

\begin{Keywords}{MSC classes}
52B05, 52B15, 68Q25, 90C57
\end{Keywords}
\end{abstract}

% ----------------------------------------------------------------------

\section{Introduction}

Submodular functions are analogues of convex functions that enjoy numerous
applications. Their structural properties have been investigated
extensively, and they have applications in such diverse areas as information
inequalities, operational research, combinatorial optimization and social
sciences, and they have also found fundamental applications in game theory
and machine learning. Consult \cite{balcan2018} and the references therein
for additional examples. For a comprehensive overview of how submodular
functions are used in machine learning in particular and in optimization in
general, see Bach's excellent monograph \cite{bach2013}.

This paper reports our results in attempting to generate all extremal
submodular functions defined on the subsets of a base set of size $n=6$.
Extremal functions form a unique minimal generating set of all submodular
functions, thus their knowledge provides invaluable information about their
structure. For $n=5$ the complete collection of extremal submodular
functions was first reported in \cite{FiveVars}. Our aim was to develop and
implement techniques which can handle the significantly more difficult
problem of listing these functions for $n=6$. Using the developed methods we
succeeded in computing the first 360 billion members of the $n=6$ list, and
also in giving a reasonable estimate for their total number.

The paper is organized as follows. Sections \ref{sec:funcintro} and
\ref{sec:1} provide definitions and an introduction to submodular functions
over finite base sets, and their description as a high-dimensional
polyhedral cone. Section \ref{sec:eer} provides an overview of methods from
polyhedral geometry used in our computations, including the Double
Description method, and the algebraic and combinatorial tests for the
adjacency of rays of the cone. Section \ref{sec:bitmap} introduces
additional improvements to the combinatorial test.

Section \ref{sec:apply} details our approaches to applying these tools to
the $n=6$ problem, including a description of the degree of degeneracy of
the polyhedral cone, and introducing two novel orderings of the inequalities
defining the cone. These orderings allowed the Double Description method to
proceed farther with considerably lower resource requirements. These new
orders, which we named \emph{t-opt} and \emph{recursive}, provided
additional insights into the intricate and highly symmetric structure of
these polyhedral cones.

Section \ref{sec:apply} continues with our results and statistical analyses
based on two approaches to generate a high number of extremal submodular
functions, while Section \ref{sec:estimate} contains an estimate for their
overall number and for the total number of their (symmetrical) orbits.

Implementations of the main algorithms discussed in this paper can be found at
\href{https://github.com/csirmaz/submodular-functions-6}%
{https://github.com/csirmaz/submodular-functions-6}. Partial results of the
computations representing 360 billion extremal submodular functions in 260M
orbits are available as a Zenodo Dataset at 
\url{https://zenodo.org/records/13954788}.

\section{Submodular functions}\label{sec:funcintro}

In general, submodular functions are real-valued functions defined on some
lattice. In the most important case---which is also the subject of this
paper---the lattice is formed by all subsets of a (finite) set $X$, called
the \emph{base}, with the intersection and union as lattice operations.
While definitions are spelled out for this special setting, those in general
are similar. Some other lattice-based functions are discussed as
illustrations.

The functions we are interested in assign real numbers
to subsets of a (typically finite) base set $X$. Such a function $f$ is called
\emph{modular} if
$$
    f(A)+f(B)=f(A\cap B)+f(A\cup B)
$$
holds for all subsets $A$ and $B$ of $X$. Functions satisfying
\begin{equation}\label{eq:2}
   f(A)+f(B) \ge f(A\cap B)+f(A\cup B)
\end{equation}
for all $A$ and $B$
are called \emph{submodular}, referring to the fact that the right hand side
is below what modularity would require. Similarly, functions satisfying
$$
  f(A)+f(B) \le f(A\cap B)+f(A\cup B)
$$
are called \emph{supermodular}. A function is modular if and only if it is
both submodular and supermodular.

\smallskip

A general example of a modular function is a (possibly signed) discrete
measure on $X$. In the more general setting $X$ is infinite and the 
function is defined only on a sublattice of all subsets of $X$, e.g.
the Lebesgue measure on the measurable subsets of the unit interval.
Taking the outer measure instead, which is defined on all subsets of the
base set, we also get a suitable $f$, but it is only submodular, see, 
e.g., \cite{measure}. In another important example the underlying lattice is the
lattice of the vectors in the $n$-dimensional Euclidean space endowed with the
coordinatewise minimum and maximum as lattice operations. Here
the function $f:\R^n\to \R$ is submodular if
\begin{equation}\label{eq:4}
    f(\mathbf x) + f(\mathbf y) \ge f(\mathbf x \land \mathbf y)
         + f(\mathbf x \lor \mathbf y).
\end{equation}
Writing $\mathbf z=\mathbf x \land \mathbf y$, $\mathbf a=\mathbf x-\mathbf
z$, $\mathbf b=\mathbf y-\mathbf z$, both $\mathbf a$ and $\mathbf b$ have
non-negative coordinates, and (\ref{eq:4}) rewrites to
$$
    f(\mathbf z{+}\mathbf a)-f(\mathbf z) \ge f(\mathbf z{+}\mathbf b+\mathbf
a)-f(\mathbf z{+}\mathbf b).
$$
This formula is interpreted as the ``diminishing returns property''
\cite{osadeghi}: investing (adding) $\mathbf a$ at state $\mathbf z$ yields the
return $f(\mathbf z{+}\mathbf a)-f(\mathbf z)$. Investing the same amount
$\mathbf a$ after another investment $\mathbf b$ has been done yields a
smaller return. In this context supermodularity corresponds to
``accelerating returns'' \cite{Kurzweil2004}, where the same amount of
investment applied later yields larger returns.

\smallskip

The present paper deals exclusively with the case when $X$ is finite and
functions are defined on all subsets of $X$. The usual notation is used:
subsets of the base set $X$ are denoted by upper case letters such as $A, B,
K, L$, etc.; elements of $X$ by lower case letters, e.g. $i,j,k$. The $\cup$ sign
denoting the union of two subsets is frequently omitted as well as the curly
brackets around singletons. Thus, for example, $Ai$ denotes the subset
$A\cup\{i\}$. The collection of all subsets of $X$, including the empty set,
is denoted by $2^X$.

\smallskip

The following simple claim summarizes the basic properties of $\MOD(X)$, the
class of modular functions on a finite set $X$, see \cite{imset12}.

\begin{claim}\label{claim:1}\correcthref\begin{itemz}
\item[\upshape a)] Choose $z\in \R$ and $a_i\in\R$ for $i\in X$ arbitrarily.
Then $m:2^X\to\R$ defined by $m(A) = z+ \sum\{ a_i: i\in A\}$ for
$A\subseteq X$ is modular.
\item[\upshape b)] Every $m\in\MOD(X)$ is of this form. Consequently,
\item[\upshape c)] $\MOD(X)$ is an $|X|+1$-dimensional linear
space.\qed
\end{itemz}\end{claim}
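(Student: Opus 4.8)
The plan is to handle the three parts in order, obtaining (b) and (c) from (a) with essentially no extra machinery.

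For part (a), I would verify the identity $m(A)+m(B)=m(A\cap B)+m(A\cup B)$ by comparing the two sides term by term. The constant $z$ contributes $2z$ to each side. For a fixed $i\in X$, the coefficient $a_i$ occurs on the left-hand side once for each of $A,B$ that contains $i$, and on the right-hand side once for each of $A\cap B$, $A\cup B$ that contains $i$; a three-way case split according to whether $i$ lies in both, in exactly one, or in neither of $A$ and $B$ shows that these two counts coincide in every case. Summing over all $i\in X$ (plus the constant term) gives the desired equality, so $m$ is modular.

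For part (b), given $m\in\MOD(X)$ I would set $z:=m(\emptyset)$ and $a_i:=m(\{i\})-m(\emptyset)$ for each $i\in X$, and then prove $m(A)=z+\sum\{a_i:i\in A\}$ by induction on $|A|$. The base case $A=\emptyset$ is just the definition of $z$. For $|A|\ge 1$, pick any $i\in A$ and put $A'=A\setminus\{i\}$; applying the modular equation to the pair $A'$ and $\{i\}$, whose intersection is $\emptyset$ and whose union is $A$, gives $m(A)=m(A')+m(\{i\})-m(\emptyset)=m(A')+a_i$, and the induction hypothesis applied to the strictly smaller set $A'$ completes the step. Part (c) then follows: by (a) and (b) the linear map $\R^{X}\times\R\to\R^{2^X}$ sending $\bigl((a_i)_{i\in X},z\bigr)$ to the function $A\mapsto z+\sum\{a_i:i\in A\}$ has image exactly $\MOD(X)$, and it is injective because from the image one recovers $z=m(\emptyset)$ and $a_i=m(\{i\})-m(\emptyset)$. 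Hence $\MOD(X)$ is a linear subspace of $\R^{2^X}$ isomorphic to $\R^{|X|+1}$, i.e. of dimension $|X|+1$; that $\MOD(X)$ is a subspace also follows directly from the defining equation being linear in $f$.

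The argument is entirely routine, so there is no real obstacle; the only places that call for a little care are the bookkeeping in the case split of part (a) and setting up the induction in (b) so that $A'$ is both strictly smaller than $A$ and disjoint from $\{i\}$.
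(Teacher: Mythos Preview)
Your proof is correct and entirely standard. The paper itself does not prove this claim: it is stated with a \qed\ marker and a reference, treating it as well known. Your arguments for (a)--(c) are exactly the routine verifications one would expect, and there is nothing to add.
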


Submodular functions remain submodular after adding (or subtracting) a
modular function. Also, a conic (that is, non-negative linear) combination
of submodular functions is again submodular. A collection $\mathcal G$ of
submodular functions over $X$ is a \emph{generator} if every submodular $f$
is the sum of a modular function and a conic combination of elements of
$\mathcal G$. A submodular function is \emph{extremal} if it is an element
of a minimal generator set. Extremal submodular functions are determined uniquely up to
a (positive) scaling factor and a modular shift, see Theorem \ref{thm:1}.
Since for a fixed finite $X$ there are only finitely many such extremal submodular
functions by Theorem \ref{thm:2}, it is possible, at least in theory, to list all
extremal submodular functions. This list also provides all extremal supermodular functions
as well since $f$ is submodular if and only if $-f$
is supermodular. Such
a list would carry invaluable information about the structure of submodular
functions. Several optimization techniques rely on finding an appropriate
extremal submodular function; with such a list, such problems would reduce to a simple search. Knowledge of
all extremal functions for the case $|X|=4$ was an essential ingredient in
investigating properties of conditional independence structures in
\cite{Matus-Studeny} and \cite{Studeny-revisited}. Extremal supermodular
functions for $|X|=5$ were reported in \cite{FiveVars}; these functions were
used in \cite{Csirmaz2020} to investigate and create new non-Shannon type
entropy inequalities for five random variables. 
In a more general setting, enumerating extremal rays is an essential
tool in objective space vector optimization problems \cite{ulus21}.

In our work attempting to generate all extremal submodular functions for
$|X|=6$
the main tools are methods from high-dimensional polyhedral
computations. For basic concepts and notions of this area, consult \cite{ziegler};
a comprehensive overview from the computational point of view is M.~Fukuda's
excellent monograph \cite{fukuda20}. A comparative study of different polyhedral
methods and implementations of different algorithms can be found in \cite{avis2016}. For notions and methods from linear algebra consult \cite{strang23}.

\section{The cone of submodular functions}\label{sec:1}

Let $n=|X|$ be the number of elements in the (finite) base set $X$. Any real function
defined on the subsets of $X$ can be represented by a $2^n$-dimensional
vector $r$ indexed by the subsets of $X$ as
$$
    r = \langle r_A: A \subseteq X \rangle,
$$
where $r_A$ is the value of the function at $A$. Both the function and the
vector notation will be used interchangeably.

\subsection{Submodular inequalities}\label{subsec:inequalities}

For disjoint subsets $A$, $B$, $K$ of $X$ with $A$ and $B$ not empty, let
$\delta(A,B|K)$ be the $2^n$-dimensional vector with four non-zero
coordinates: $+1$ at indices $AK$ and $BK$, and $-1$ at indices $K$ and
$ABK$. (The assumptions on $A$, $B$, $K$ ensure that these indices are
different.) The scalar product of $r$ and $\delta(A,B|K)$ is
$$
   r\cdot\delta(A,B|K)= r_{AK} + r_{BK} - r_K - r_{ABK}.
$$ 
A function represented by $r$ satisfies all inequalities in (\ref{eq:2}), that is, $r$ is submodular, if
and only if the scalar product $r\cdot\delta(A,B|K)$ is non-negative for
every choice of $A$, $B$ and $K$.
We will also use the bare triplet $(A,B|K)$ to mean the
\emph{inequality} expressing
$$f(AK)+f(BK) - f(K)-f(ABK) \ge 0$$
for some unspecified
function $f$, while $\delta(A,B|K)$ is the vector corresponding to, or
labeled by, this inequality.

The complete set of the inequalities $(A,B|K)$ is 
highly redundant. It can be illustrated based on the equality
\begin{equation}\label{eq:chain}
    \delta(A,B|K)+\delta(A,C|KB) = \delta(A,BC|K)
\end{equation}
known as the \emph{chain rule} in Information Theory \cite{Yeungbook}.
From this, if the inner products with the vectors on the left hand side are non-negative,
then so is with the one on the right hand side. Therefore the inequality
$(A,BC|K)$ is a consequence
of $(A,B|K)$ and $(A,C|KB)$. The unique minimal set of inequalities which
implies all others consists of the so-called elementary inequalities
(the terminology is from \cite{Studeny10}, see also \cite{imset12}). The inequality $(A,B|K)$ is
\emph{elementary} if both $A$ and $B$ are singletons. The matrix formed from the
row vectors corresponding to the elementary inequalities is denoted by $M^\sharp$:
$$
 M^\sharp = \big\{ \delta(i,j|K) : i,j\in X, K\subseteq X\setminus ij \big\}.
$$
Clearly, $M^\sharp$ has $2^n$ columns and ${n \choose 2}2^{n-2}$ rows.

\begin{claim}\label{claim:3}\correcthref
\begin{itemz}
\item[\upshape a)] The function represented by the vector $r$ is submodular if
and only if $M^\sharp\cdot r \ge 0$.
\item[\upshape b)] All elementary inequalities in $M^\sharp$ are necessary.
\item[\upshape c)] $M^\sharp$ has a rank of $2^n-n-1$.
\end{itemz}%
\end{claim}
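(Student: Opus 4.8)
The three parts are naturally handled in the order (a), (c), (b): once the chain-rule bookkeeping of (a) is in place, (c) is almost immediate, while (b) is independent and is where the real work lies.

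\textbf{Part (a).} The forward implication is immediate: if $r$ is submodular then $r\cdot\delta(i,j|K)\ge 0$ for every elementary triple, so $M^\sharp\cdot r\ge 0$. For the converse I would show, by induction on $|A|+|B|$, that every vector $\delta(A,B|K)$ is a non-negative combination of rows of $M^\sharp$. If $|A|=|B|=1$ there is nothing to prove. Otherwise, say $|B|\ge 2$; pick $b\in B$ and write $B=bB''$ with $B''\neq\emptyset$, so that the chain rule (\ref{eq:chain}) gives $\delta(A,B|K)=\delta(A,b|K)+\delta(A,B''|Kb)$, where both summands are legitimate triples with a strictly smaller value of $|A|+|B|$ (if instead $|A|\ge 2$, use $\delta(A,B|K)=\delta(B,A|K)$ and split $A$). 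Unfolding the recursion writes $\delta(A,B|K)$ as a sum of elementary vectors with coefficient $1$. Hence $M^\sharp\cdot r\ge 0$ forces $r\cdot\delta(A,B|K)\ge 0$ for all $A,B,K$, i.e.\ $r$ is submodular.

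\textbf{Part (c).} A modular function $m$ satisfies $m(AK)+m(BK)=m(K)+m(ABK)$ for all disjoint $A,B,K$, in particular $m\cdot\delta(i,j|K)=0$ for every elementary triple, so $\MOD(X)\subseteq\ker M^\sharp$; by Claim \ref{claim:1}(c) this gives $\dim\ker M^\sharp\ge n+1$, hence $\operatorname{rank}M^\sharp\le 2^n-n-1$. Conversely, if $M^\sharp\cdot r=0$, then by the decomposition of part (a) we get $r\cdot\delta(A,B|K)=0$ for all $A,B,K$, so $r$ is simultaneously submodular and supermodular, hence modular. Therefore $\ker M^\sharp=\MOD(X)$ has dimension exactly $n+1$ and $\operatorname{rank}M^\sharp=2^n-n-1$.

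\textbf{Part (b).} Fix an elementary triple $(i,j|K)$. It is redundant precisely when $\delta(i,j|K)$ lies in the convex cone spanned by the remaining rows of $M^\sharp$. My plan is to exhibit a submodular function $g=g_{(i,j|K)}$ that is tight at $(i,j|K)$, i.e.\ $g\cdot\delta(i,j|K)=0$, but strictly satisfies every other elementary inequality, $g\cdot\delta(i',j'|K')>0$. Given such a $g$, any relation $\delta(i,j|K)=\sum\lambda_{(i',j'|K')}\delta(i',j'|K')$ with $\lambda\ge 0$ over the other triples would give $0=g\cdot\delta(i,j|K)=\sum\lambda_{(i',j'|K')}\,g\cdot\delta(i',j'|K')$, forcing every $\lambda$ to vanish and hence $\delta(i,j|K)=0$, a contradiction; equivalently, $g$ perturbed by a small multiple of the unit vector at coordinate $K$ satisfies all elementary inequalities except $(i,j|K)$, witnessing irredundancy directly. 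The symmetry group of $M^\sharp$ — generated by the permutations of $X$ together with the reflection $A\mapsto X\setm A$, which sends $\delta(i,j|K)$ to $\delta(i,j|X\setm Kij)$ — acts on elementary triples with orbits indexed by $\min(|K|,n-2-|K|)$, so it suffices to construct $g_{(i,j|K)}$ for one representative of each orbit.

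The construction of $g$ is the main obstacle. No linear-algebraic shortcut is available: for $n\ge 4$ the $\binom{n}{2}2^{\,n-2}$ rows of $M^\sharp$ outnumber the $2^n$ coordinates and are linearly dependent, so no vector is orthogonal to all rows but one. Moreover $g$ cannot be taken to be a matroid rank function: a rank function that is modular on the ``square'' $\{K,iK,jK,ijK\}$ but strictly submodular on every other elementary square would force a parallelism pattern (everything pairwise parallel except $i,j$) that is not realizable in any matroid. Thus $g$ must be a genuinely non-matroidal submodular function with carefully calibrated values. Concretely, one starts from a strictly submodular seed such as $f_0(S)=-\binom{|S|}{2}$ (for which every elementary inequality has slack exactly $1$), adds a correction supported near the square above $K$ so that the slack at $(i,j|K)$ drops to $0$, and then checks — orbit by orbit on the remaining triples — that the slack stays positive everywhere else. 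It is this calibration and case analysis, rather than any conceptual difficulty, that constitutes the bulk of the proof. That the elementary inequalities are precisely the facets of the submodular cone is classical; see \cite{Studeny10,imset12}.
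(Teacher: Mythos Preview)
Your arguments for (a) and (c) are correct and coincide with the paper's: the chain-rule induction on $|A|+|B|$ for (a), and the identification $\ker M^\sharp=\MOD(X)$ via Claim~\ref{claim:1}(c) for (c).

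Part (b) is where the proposal is incomplete. You correctly recognise that irredundancy of $(i,j|K)$ is witnessed by a vector satisfying every other elementary inequality while violating this one, and you plan to reach such a vector by first building a \emph{submodular} $g$ that is tight only at $(i,j|K)$ and then perturbing at coordinate $K$. But you never actually construct $g$: you sketch a seed-plus-correction scheme starting from $f_0(S)=-{|S|\choose 2}$ and explicitly defer the ``calibration and case analysis'' that you yourself call ``the bulk of the proof''. The discussion of matroid rank functions and of symmetry orbits is tangential and not needed for the argument.

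The paper's proof of (b) is a single explicit construction that sidesteps all of this. With $k=|K|$, set $g(A)=\min\{|A|,k{+}1\}$ for every $A\subseteq X$, and then overwrite $g(iK)=g(jK)=k$. One computes $g\cdot\delta(i,j|K)=k+k-k-(k{+}1)=-1$, while a short case check gives $g\cdot\delta(i',j'|K')\in\{0,1\}$ for every other elementary triple. This $g$ is already the desired (non-submodular) irredundancy witness---no perturbation, no symmetry reduction, no interior seed. Your detour stems from insisting that the intermediate witness be submodular; once that requirement is dropped, the construction is two lines.
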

\begin{proof}
a) To show that $M^\sharp\cdot r\ge 0$ implies $r\cdot\delta(A,B|K)\ge 0$ use
induction on $|A|+|B|$. If $|A|+|B|=2$, then $(A,B|K)$ is elementary, 
thus it is a row in $M^\sharp$. Otherwise, either $A$ or $B$ has at least two 
elements. Use the chain rule (\ref{eq:chain}) and induction.

\smallskip\noindent
b) To show that the row $\delta(i,j|K)$ in $M^\sharp$ cannot be omitted, it suffices
to exhibit a non-submodular function $g$ which satisfies all elementary
inequalities except this one. Let $|K|=k$ (clearly $0\le k\le n{-}2$), and
set $g(iK)=g(jK)=k$. For all other subsets $A\subseteq X$ define $g(A)=\min\{
|A|,k{+}1\}$. Then 
$$
   g\cdot\delta(i,j|K)= k+k-k-(k{+}1) = -1, 
$$
and it is easy to check that for the other elementary triplets the inner product 
$g\cdot\delta(i',j'|K')$ is either zero or plus one.

\smallskip\noindent
c) The kernel (zero set) of $M^\sharp$ is the set of modular functions as
$M^\sharp\cdot m=0$ if and only if $m$ is modular. By point c) of Claim \ref{claim:1} 
modular functions form an $n{+}1$-dimensional linear space. $M^\sharp$ has $2^n$
columns, thus the rank of $M^\sharp$ is $2^n-(n+1)$, as was claimed.
\end{proof}

\subsection{Standardization}\label{subsec:standard}

Two submodular functions are ``equivalent'' if their difference is modular.
This relation clearly splits the submodular functions into equivalence
classes. \emph{Standardization} is a method that assigns the same
representative to each element of such a class. For supermodular functions
three natural, theoretically motivated standardization methods are mentioned in
\cite[Chapter 5.1]{Studeny10}. For submodular functions, however, with a
focus on matroid theory, the following \emph{polymatroidal} standardization is
recommended. Consider the following linear space of functions on $2^X$:
$$
   \mathcal S_p(X) = \{ f: f(\emptyset)=0 \mbox{ and }
      \forall i\in X ~ f(X\setm i)=f(X) \,\}.
$$
The \emph{$p$-standardized} form of $f$ is the only element which is
both in the equivalence class $f+\MOD(X)$ and the linear space $\mathcal S_p(X)$.

As defined in \cite{Csirmaz2020} or in \cite{M2016}, \emph{tight
polymatroids} on $X$ are those submodular functions which are additionally
\begin{itemz}
\item[~] pointed:  $f(\emptyset)=0$,
\item[~] monotone: $A\subseteq B$ implies $f(A)\le f(B)$, and
\item[~] tight at the top: $f(X)=f(X\setm i)$ for all $i\in X$.
\end{itemz}
Clearly a $p$-standardized function $g$ is both pointed and tight.
Since monotonicity is a consequence of submodularity and these two
properties, $g$ is also monotone, thus it is a tight polymatroid.
The difference of two different tight polymatroids is never modular, which proves

\begin{claim}\label{claim:2}
The class of $p$-standardized submodular functions is the class of tight
polymatroids.\qed
\end{claim}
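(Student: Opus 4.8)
The plan is to prove the asserted set equality by the two inclusions, after recording why the $p$-standardization is well defined. By Claim~\ref{claim:1}(b) every modular $m$ has the form $m(A)=z+\sum_{i\in A}a_i$, so a one‑line computation gives $\mathcal S_p(X)\cap\MOD(X)=\{\zerovec\}$: imposing $m(\emptyset)=0$ forces $z=0$, and $m(X\setm i)=m(X)$ then forces $a_i=0$ for every $i$. Dually, for an arbitrary $f$ one can solve $(f+m)(\emptyset)=0$ and $(f+m)(X\setm i)=(f+m)(X)$ for the coefficients of $m$ (obtaining $z=-f(\emptyset)$ and $a_i=f(X\setm i)-f(X)$), so the class $f+\MOD(X)$ meets $\mathcal S_p(X)$ in exactly one point. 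This is what makes ``the $p$-standardized form of $f$'' meaningful, and, since $\mathcal S_p(X)$ is a linear space, it is the same as the assertion that two distinct tight polymatroids can never differ by a modular function.

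For the inclusion ``$p$-standardized submodular $\subseteq$ tight polymatroid'', let $g$ be submodular with $g\in\mathcal S_p(X)$. By the definition of $\mathcal S_p(X)$, $g$ is pointed and tight at the top, so only monotonicity remains. This is the one step that is not pure bookkeeping: for any set $B$ and any $i\in B$, apply the submodular inequality (\ref{eq:2}) to $B$ and $X\setm i$ --- whose union is $X$ and whose intersection is $B\setm i$ --- to get $g(B)+g(X\setm i)\ge g(B\setm i)+g(X)$, and since $g(X\setm i)=g(X)$ this collapses to $g(B\setm i)\le g(B)$. Deleting the elements of $B\setm A$ one at a time then yields $g(A)\le g(B)$ whenever $A\subseteq B$, so $g$ is a tight polymatroid.

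Conversely, a tight polymatroid $g$ is submodular, and the conditions $g(\emptyset)=0$ and $g(X\setm i)=g(X)$ say exactly that $g\in\mathcal S_p(X)$; since $g$ is then the unique element of $g+\MOD(X)$ lying in $\mathcal S_p(X)$, it is its own $p$-standardized form, hence a $p$-standardized submodular function. Combining the two inclusions proves the claim. I expect the only real obstacle to be the monotonicity deduction above, and the only subtlety there is that it uses the general inequality $f(A)+f(B)\ge f(A\cap B)+f(A\cup B)$ of (\ref{eq:2}) --- legitimate by Claim~\ref{claim:3}(a), since $M^\sharp r\ge 0$ encodes all such inequalities --- rather than just the elementary triplets $\delta(i,j|K)$; everything else is unwinding the definitions of $\mathcal S_p(X)$, of modularity, and of $p$-standardization.
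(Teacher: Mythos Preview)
Your proof is correct and follows essentially the same route as the paper: the paragraph preceding the claim already argues that a $p$-standardized submodular function is pointed and tight by definition, that monotonicity is then a consequence of submodularity, and that two distinct tight polymatroids never differ by a modular function. You have simply made each of these steps explicit---in particular the monotonicity derivation via $g(B)+g(X\setm i)\ge g(B\setm i)+g(X)$, which the paper leaves as an unproved assertion, and the verification that $\mathcal S_p(X)\cap\MOD(X)=\{\zerovec\}$, which underlies the well-definedness of $p$-standardization.
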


A consequence of Claim \ref{claim:2} is that $p$-standardized submodular
functions are automatically non-negative. One of our algorithms sketched as
Code \ref{code:4} uses this
fact as a quick preliminary check.

Other standardizations can be defined analogously by choosing different
$(n{+}1)$-dimensional linear subspaces which intersect each $f{+}\MOD(X)$ class
in a single element. Two of the subspaces suggested in \cite{Studeny16} are
the lower space
$$
   \mathcal S_\ell(X)=\{ f: f(A)=0 \mbox{ when $|A|\le 1$} \},
$$
and the upper space
$$
   \mathcal S_u(X) =\{ f: f(A)=0 \mbox{ when $|A| \ge n-1$}\},
$$
and there are many other possibilities. Using a different standardization
has no, or little, effect on the computational complexity of our algorithms
(however, it might affect the magnitude of the numbers to work with), thus
the choice of $\mathcal S_p$ is rather arbitrary, and was influenced mainly
by the authors' familiarity with polymatroids.

Theoretically, standardization is determined by a matrix $N$ with $2^n$
columns and $n+1$ rows such that the composite matrix ${M^\sharp\choose N }$
has full rank. $N$-standardized submodular functions are those
$2^n$-dimensional vectors for which both $M^\sharp r\ge 0$ and $N r=0$ hold.
Therefore, these vectors sit in the $2^n-(n{+}1)$-dimensional subspace
orthogonal to $N$. Using some alternate coordinate system of the
$2^n$-dimensional space with coordinates either in $N$ or orthogonal to $N$,
the overall dimension of the standardized functions is reduced from $2^n$ to
$2^n-(n{+}1)$.

\subsection{The cone of $p$-standardized functions}\label{subsec:std}

In case of lower or upper standardization, the subspace $\mathcal S_\ell$
(or $\mathcal S_u$, respectively) is spanned by the lowest (topmost) $n+1$
coordinates, thus the reduction simply discards these coordinates (and
replaces them by zeros). In case of $p$-standardization, the reduced
function $g$ is determined by the coordinates in the set
\begin{equation}\label{eq:R}
    \mathcal R = \{ A\subseteq X : A\ne \emptyset \mbox{ and } |A|\ne n{-}1
\}.
\end{equation}
Let $g$ be a vector with coordinates in $\mathcal R$, and expand it to a
$2^n$-dimensional vector $\tilde g$ by defining the values at the missing
places as $\tilde g(\emptyset)=0$ and $\tilde g(A)=g(X)$ for $|A|=n-1$.
Clearly, $\tilde g$ is in the linear space $\mathcal S_p$, thus $g$ is a
reduced $p$-standardized submodular function if and only if $M^\sharp \tilde
g \ge 0$. This product, however, can be computed directly from $g$. Let $M$
be the matrix obtained from $M^\sharp$ by deleting the column corresponding
to the empty set (as $\tilde g$ is zero at $\emptyset$), and replacing the
$n+1$ columns corresponding to subsets $A$ with $|A|\ge n-1$ by their sum
(as $\tilde g$ has the same value at these indices). In other words,
$M=M^\sharp S_p$, where $S_p$ is the matrix
$$
\def\zerobox#1{\hbox to 0pt{\hspace{20pt}\footnotesize$\Leftarrow$ #1\hss}}%
S_p =    \left(\begin{array}{cc}
            0 & 0\zerobox{1 row}\\
            \mathbf I & 0\zerobox{$2^n-(n{+}2)$ rows}\\
             0 & \mathbf 1\zerobox{$n+1$ rows}
          \end{array}\right)
          \hspace{60pt}
$$
Here $\mathbf I$ is the unit matrix and $\mathbf 1$ is a column of $n+1$
ones. Since $\tilde g=S_p g$, we have $M g = M^\sharp S_p g = M^\sharp 
\tilde g$. Let us define 
\begin{equation}\label{eq:C}
\mathcal C = \{ g: M g\ge 0\},
\end{equation}
where $g$ is a $2^n-(n{+}1)$-dimensional vector with indices from $\mathcal
R$. Clearly, $\mathcal C$ is the set of reduced and $p$-standardized
submodular functions, therefore every submodular function on $X$ is the sum
of a modular function and the expansion of an element from $\mathcal C$.

\begin{claim}\label{claim:C}
$\mathcal C$ is a full-dimensional pointed polyhedral cone.
\end{claim}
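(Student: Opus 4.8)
\emph{Proof plan.} That $\mathcal C$ is a polyhedral cone needs no argument: it is the solution set of the finite homogeneous system $Mg\ge 0$. So what must be shown is that $\mathcal C$ is full-dimensional and pointed, and I would deduce both from the identity $M=M^\sharp S_p$ together with two facts already in hand: $\ker M^\sharp=\MOD(X)$ (from the proof of Claim~\ref{claim:3}c), and $\MOD(X)\cap\mathcal S_p(X)=\{0\}$. The latter is elementary: a modular $m(A)=z+\sum_{i\in A}a_i$ with $m(\emptyset)=0$ has $z=0$, and then $m(X\setm i)=m(X)$ forces every $a_i=0$. Note also that $S_p$ has full column rank $2^n-(n{+}1)$, visible from its identity block together with its nonzero last column, and that $\mathrm{Im}\,S_p=\mathcal S_p(X)$.

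\emph{Full-dimensionality.} It is enough to produce a $g$ with $Mg>0$ coordinatewise, since a whole neighbourhood of such a $g$ then lies in $\mathcal C$. Pick a function on $2^X$ that is strictly submodular on every elementary triplet; the simplest is $h(A)=-\tfrac12|A|^2$, which depends only on $|A|$ and for which a two-line computation gives $h\cdot\delta(i,j|K)=1$ for every elementary $(i,j|K)$, i.e.\ $M^\sharp h=\mathbf 1$. Let $g$ be the reduced $p$-standardized form of $h$, so that $\tilde g=S_pg=h+m$ for some $m\in\MOD(X)$. Then $Mg=M^\sharp S_pg=M^\sharp\tilde g=M^\sharp h+M^\sharp m=\mathbf 1>0$, so $g$ is an interior point of $\mathcal C$ and $\mathcal C$ is full-dimensional.

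\emph{Pointedness.} A polyhedral cone is pointed iff it contains no line, i.e.\ iff $g\in\mathcal C$ and $-g\in\mathcal C$ imply $g=0$. From $Mg\ge 0$ and $-Mg\ge 0$ we get $Mg=0$, hence $M^\sharp(S_pg)=0$, so $\tilde g=S_pg$ is modular; but $\tilde g$ also lies in $\mathcal S_p(X)$, hence $\tilde g=0$ by $\MOD(X)\cap\mathcal S_p(X)=\{0\}$, and then $g=0$ since $S_p$ is injective. (Equivalently, this shows $M$ has full column rank, which is the same as saying the stacked matrix $\binom{M^\sharp}{N}$ has full rank.)

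\emph{Main obstacle.} None of the three pieces is hard in itself; the only point requiring attention is keeping the two coordinate systems separate — the $2^n$-dimensional ambient space carrying $M^\sharp$, $\MOD(X)$ and $\mathcal S_p(X)$, versus the reduced $\mathcal R$-coordinates in which $\mathcal C$ lives. Stating the bridge $M=M^\sharp S_p$ with $\ker M^\sharp=\MOD(X)$, $\mathrm{Im}\,S_p=\mathcal S_p(X)$, and $\MOD(X)\cap\mathcal S_p(X)=\{0\}$ up front makes every ambient-space fact transfer mechanically, and the rest is bookkeeping together with the one small computation $M^\sharp h=\mathbf 1$.
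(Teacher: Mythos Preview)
Your proof is correct and takes a genuinely different route from the paper's. For full-dimensionality, the paper exhibits $2^n-n-1$ linearly independent rays in $\mathcal C$ (the functions $f_J$ of (\ref{eq:fj})), whereas you produce a single interior point via the strictly submodular $h(A)=-\tfrac12|A|^2$; your argument is shorter and sidesteps the independence check, but the paper's choice is not wasted effort, since the $f_J$ reappear in Section~\ref{subsec:weight} as the extremal rays of largest weight. For pointedness, the paper invokes Claim~\ref{claim:2} (every $p$-standardized submodular function is non-negative, so $\mathcal C$ lies in the non-negative orthant), while you argue directly that $\ker M=\{0\}$ via $\ker M^\sharp=\MOD(X)$ and $\MOD(X)\cap\mathcal S_p(X)=\{0\}$. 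Your approach is more self-contained and cleanly isolates the linear algebra; the paper's has the side benefit of establishing non-negativity of all of $\mathcal C$, a fact it exploits later (see Code~\ref{code:4}).
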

\begin{proof}
$\mathcal C$ is the intersection of ${n\choose 2}2^{n-2}$ many half-spaces
(the number or rows in $M$), thus it is polyhedral. All of these
halfspaces contain the origin, thus $\mathcal C$ is the union of rays
(half-lines) starting from the origin. By Claim \ref{claim:2}, $\mathcal C$
is a subset of the non-negative orthant (all coordinates of a $p$-standardized
submodular function are non-negative), thus $\mathcal C$ does not contain a
full line; and, in particular, $\mathcal C \cap -\mathcal C=\{0\}$. To prove that
$\mathcal C$ is full-dimensional, it is enough to show that it contains $2^n-n-1$ many linearly 
independent vectors. Choose $J\subseteq X$ with at least two
elements (observe that there are $2^n-n-1$ many such subsets). For each of
them, consider the function 
\begin{equation}\label{eq:fj}
     f_J(A) = \left\{\begin{array}{rl}
               1 & \mbox{if $J\cap A \ne \emptyset$,}\\
               0 & \mbox{otherwise. }
              \end{array}\right.
\end{equation}
It is easy to check that $f_J$ is both submodular and $p$-standardized
(since $|J|\ge 2$, $f_J(A)=1$ when $|A|\ge n-1$). Linear independence of the
vectors $f_J$ can be checked directly (by induction on the number of
elements in $X$), or by observing that their M\"obius transform gives all
unit vectors. For details consult \cite[Lemma 3]{fmtwocon}.
\end{proof}

An immediate consequence of this claim is that the matrix $M$ in
(\ref{eq:C}) has full rank.

\smallskip

Let us recall some further terminology of polyhedral geometry from
\cite{fukuda20} and \cite{ziegler}. A \emph{face} of the cone $\mathcal C$ is
a subset $F$ of $\mathcal C$ with the property that if a positive convex
combination of points in $\mathcal C$ falls into $F$, then the starting
points are also in $F$. Both $\mathcal C$ and the empty set are
faces---they are the trivial ones---, and the only single-element face is the
\emph{vertex} of the cone, which in this case is the origin. Proper faces
are just the intersection of $\mathcal C$ and a supporting hyperplane. The
dimension of a face $F$ is its affine dimension; for pointed cones it is the
maximal number of linearly independent vectors in $F$. One-dimensional faces
are \emph{edges} or \emph{extremal rays}; and a \emph{facet} is a face of
codimension $1$. Each proper face $F$ is the intersection of all facets
containing $F$. Facets of $\mathcal C$ lie on and are identified by the hyperplanes defined by
the rows of the matrix $M$---this follows from point b) of Claim
\ref{claim:3} which says that none of these rows is redundant.

\begin{theorem}\label{thm:1}
Extremal submodular functions on $X$ are unique up to a positive scaling and
a shift by a modular function.
\end{theorem}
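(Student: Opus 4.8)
The plan is to transfer the statement to the cone $\mathcal C$ of Claim \ref{claim:C} and then invoke the classical description of the minimal conic generating sets of a pointed polyhedral cone. First I would record that $p$-standardization is the linear projection of $\R^{2^X}$ onto $\mathcal S_p(X)$ along $\MOD(X)$ — these subspaces are complementary because ${M^\sharp\choose N}$ has full rank — and that, composed with the restriction to the coordinates in $\mathcal R$, it yields a linear map whose kernel is exactly $\MOD(X)$ and which sends the submodular functions onto $\mathcal C$. Writing $\bar f\in\mathcal C$ for the image of a submodular $f$, this map carries conic combinations to conic combinations and annihilates modular summands. Hence a family $\mathcal G$ of submodular functions is a generator if and only if $\{\bar g:g\in\mathcal G\}$ generates $\mathcal C$ as a cone, and $\mathcal G$ is a minimal generator if and only if $\{\bar g:g\in\mathcal G\}$ is a minimal conic generating set of $\mathcal C$; in particular a minimal generator contains no modular function, since its image would be $0$. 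This reduces Theorem \ref{thm:1} to the claim that the minimal conic generating sets of $\mathcal C$ are exactly the sets obtained by choosing one nonzero point on each extremal ray of $\mathcal C$.

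To prove that claim I would use only that $\mathcal C$ is a pointed polyhedral cone. By Minkowski--Weyl it is finitely generated, and for a pointed cone the finitely many extremal rays $R_1,\dots,R_k$ already generate it (see \cite{fukuda20,ziegler}); thus any selection of nonzero $p_i\in R_i$ is a conic generating set. The crucial observation is that if $R$ is an extremal ray spanned by a nonzero $p$ and $p=\sum_i\lambda_i g_i$ with $\lambda_i\ge 0$ and $g_i\in\mathcal C$, then every $g_i$ with $\lambda_i>0$ lies on $R$: after discarding the zero coefficients and dividing by $s=\sum_i\lambda_i$, which is positive because $p\ne 0$, the identity becomes a positive convex combination equal to $p\in R$, so by the face property stated above each remaining $g_i$ lies in the face $R$, and being on a ray it is a nonnegative multiple of $p$. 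Consequently every conic generating set of $\mathcal C$ must contain, for each $i$, a positive multiple of a spanner of $R_i$, and no member of such a set can lie off the extremal rays or be dropped without losing some $R_i$; hence the minimal conic generating sets are precisely the sets $\{p_1,\dots,p_k\}$ with $p_i\in R_i\setminus\{0\}$, and any two of them differ by independent positive rescalings of the individual $p_i$.

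Finally I would translate back. If $f$ is extremal, then $\bar f$ is a nonzero point on some extremal ray $R_i$; and if $g$ is any submodular function with $\bar g$ on the same ray, then $\bar g=\mu\bar f$ for a scalar $\mu$, which is positive unless $\bar g=0$, and $g-\mu f$ has vanishing image and therefore lies in $\MOD(X)$. Thus $g$ is a positive scalar multiple of $f$ shifted by a modular function, which is exactly the asserted uniqueness; conversely, the expansions of any choice of nonzero points on the extremal rays form a minimal generator, so every extremal ray does arise from an extremal submodular function.

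The main obstacle I anticipate is organisational rather than geometric: carefully checking that ``generator'' and ``minimal generator'' for submodular functions correspond precisely, under $p$-standardization, to ``conic spanning set'' and ``minimal conic spanning set'' of $\mathcal C$ — including the point that modular functions drop out of any minimal generator because their standardization is zero. The cone-theoretic core in the second paragraph is entirely standard and can simply be quoted from \cite{fukuda20} or \cite{ziegler}.
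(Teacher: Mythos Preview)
Your proposal is correct and follows essentially the same route as the paper: pass to the pointed polyhedral cone $\mathcal C$ via the linear $p$-standardization map with kernel $\MOD(X)$, identify generators with conic spanning sets of $\mathcal C$, and read off the result from the description of minimal conic spanning sets as one nonzero point per extremal ray. The paper's proof is terser and leaves implicit the correspondence between minimal generators and minimal conic spanning sets that you spell out; your extra care there (no modular elements, injectivity of $g\mapsto\bar g$ on a minimal generator) is a genuine clarification rather than a different idea.
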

\begin{proof}
For a submodular function $f$ let its $p$-standardized and reduced image be
$f^o$. Observe that the map $f\mapsto f^o$ is linear, and the
image is the complete cone $\mathcal C$. Linearity implies that conic
combination is preserved. Thus if $f^o$ is not on an extremal ray of
$\mathcal C$, then $f$ is not extremal. At the same time,
supermodular functions whose image is on some extremal ray of $\mathcal C$
form a generator: every other supermodular function is a positive conic
combination of them. As supermodular functions whose image is the same $g\in\mathcal
C$ differ by a modular shift only, the claim of the theorem follows.
\end{proof}

\begin{theorem}\label{thm:2}
For a finite $X$ there are finitely many extremal submodular functions.
\end{theorem}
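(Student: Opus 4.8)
The plan is to deduce finiteness from the structure already established: by Claim \ref{claim:C}, $\mathcal C$ is a full-dimensional pointed polyhedral cone defined by the finitely many inequalities $M g \ge 0$. By Theorem \ref{thm:1}, extremal submodular functions correspond (up to positive scaling and modular shift) to the extremal rays of $\mathcal C$, so it suffices to show that a pointed polyhedral cone has only finitely many extremal rays.

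First I would recall that each extremal ray of $\mathcal C$ lies on the intersection of some subset of the facet hyperplanes — the hyperplanes defined by the rows of $M$ — since every proper face is the intersection of the facets containing it. An extremal ray is a one-dimensional face, hence is cut out by those rows of $M$ that vanish on it; as the ray is one-dimensional, the corresponding rows of $M$ span a subspace of codimension exactly $1$ in $\R^{|\mathcal R|}$. Thus each extremal ray is determined by a choice of a subset of the ${n\choose 2}2^{n-2}$ rows of $M$ whose common solution space is a single line through the origin. Since there are only finitely many subsets of rows of $M$, and each such subset determines at most one line (hence at most one ray, because $\mathcal C$ is pointed and therefore contains at most one of the two half-lines of that line), there can be at most $2^{\binom n2 2^{n-2}}$ extremal rays, a finite number.

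Finally I would translate this back: every extremal submodular function on $X$ has a $p$-standardized reduced image $f^o \in \mathcal C$ lying on an extremal ray, and by Theorem \ref{thm:1} two extremal submodular functions with images on the same ray differ only by a positive scalar and a modular shift. Hence the number of extremal submodular functions up to scaling and modular shift equals the number of extremal rays of $\mathcal C$, which we have just bounded by a finite quantity. This establishes the claim.

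The only real subtlety — hardly an obstacle — is justifying that a one-dimensional face is the solution set of the vanishing subset of rows and that pointedness rules out double-counting the two opposite half-lines; both facts follow directly from the terminology recalled after Claim \ref{claim:C} and from $\mathcal C \cap -\mathcal C = \{0\}$. Everything else is a finite counting argument, so no heavy computation is needed.
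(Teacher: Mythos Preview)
Your proof is correct and follows essentially the same approach as the paper: reduce to counting extremal rays of $\mathcal C$ via Theorem~\ref{thm:1}, observe that each extremal ray is determined by the subset of facet hyperplanes containing it, and conclude finiteness from the finiteness of the facet set. The paper's version is terser, but your added remarks about pointedness excluding the opposite half-line and the explicit bound $2^{\binom{n}{2}2^{n-2}}$ are in the same spirit (the paper states this bound immediately after the proof).
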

\begin{proof}
According to Theorem \ref{thm:1} extremal submodular functions are, up to
scaling and modular shift, in a one-to-one correspondence with the extremal
rays of the cone $C$. Every extremal ray of $\mathcal C$ is the intersection
of the facets it is a subset of. Since $\mathcal C$ has ${n \choose
2}2^{n-2}$ many facets, it has finitely many extremal rays, which proves the
statement.
\end{proof}
Theorem \ref{thm:2} gives a trivial upper bound on the number of extremal
submodular functions. This is $2^{80}\approx 10^{24}$ for $n=5$, while
the actual value is around $10^5$, see Table \ref{table:1}. For $n=6$
we also expect a huge gap between the bound $2^{240}\approx 10^{72}$ given
by this theorem and the actual value.

\subsection{Symmetries}\label{subsec:symmetry}

Submodular functions have many symmetries. The most notable ones are induced by 
permutations of the base set $X$. Let $\pi$ be such a permutation of $X$, which we
extend to functions on $2^X$ by 
$$
    (\pi f)(A) = f(\pi A),
$$
where $\pi A$ is the image of $A\subseteq X$ under $\pi$. Trivially, $f$ is submodular
(supermodular or modular) iff $\pi f$ is such. There is a less known
symmetry of submodular functions called \emph{reflection} \cite{Studeny10} 
defined as
$$
    f^\c(A) = f(X\setm A),
$$
arising from the permutation $A\mapsto X\setm A$ of the subsets of $X$.
Claim \ref{claim:5} essentially says that these are the only symmetries
induced by permutations of subsets of $X$ which map the complete
set of inequalities in (\ref{eq:2}) onto itself.

\begin{claim}\label{claim:5}
Suppose a permutation of the subsets of $X$ induces a permutation of the inequalities in
(\ref{eq:2}). Then it is equivalent to an optional reflection followed by a symmetry
induced by a permutation of $X$.
\end{claim}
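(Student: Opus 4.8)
The plan is to re-express the hypothesis as a combinatorial condition on the Boolean lattice $2^X$, deduce from it that the permutation respects the Hasse diagram of $2^X$, and then argue that among the automorphisms of that diagram only the symmetries allowed by the claim can respect the extra structure carried by the inequalities. Write $\sigma$ for the given permutation of $2^X$; it acts on $\R^{2^X}$ by permuting coordinates, and the assumption says exactly that this action maps the set $\{\delta(A,B|K)\}$ onto itself, and the same is true for $\sigma^{-1}$. A first observation is that each vector $\delta(A,B|K)$ is recovered from the four-element set $\{K,AK,BK,ABK\}$ it is supported on: ordered by inclusion this is a \emph{diamond}, a sublattice isomorphic to $2^{\{0,1\}}$, and $\delta(A,B|K)$ takes the value $-1$ at its least and greatest elements and $+1$ at the two incomparable ``middle'' elements; conversely every diamond of $2^X$ arises from a unique such vector. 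Since permuting coordinates sends $\delta(A,B|K)$ to the $\pm1$-vector supported on $\sigma(K),\sigma(AK),\sigma(BK),\sigma(ABK)$ with the two $+1$'s at $\sigma(AK),\sigma(BK)$, the hypothesis translates to: $\sigma$ maps every diamond of $2^X$ onto a diamond, carrying its least/greatest pair to the least/greatest pair and its middle pair to the middle pair.

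Next I would extract two facts, using that an incomparable pair $\{u,v\}$ is exactly the middle pair of the diamond $\{u\cap v,\,u,\,v,\,u\cup v\}$, and that a pair of comparable sets differing by at least two elements is exactly the least/greatest pair of some diamond. First, $\sigma$ and $\sigma^{-1}$ preserve incomparability, so $\sigma$ is an automorphism of the comparability graph of $2^X$. Second, $\sigma$ preserves, in both directions, the relation ``comparable and two or more elements apart.'' Now a covering pair of $2^X$ is precisely a comparable pair that is \emph{not} of this second type, so the two facts together force $\sigma$ to be an automorphism of the Hasse diagram of $2^X$ — that is, of the hypercube graph $Q_n$.

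The automorphism group of $Q_n$ is well known: every automorphism has the form $A\mapsto\pi(A)\,\triangle\,B_0$ for a permutation $\pi$ of $X$ and a fixed $B_0\subseteq X$, a coordinate permutation followed by a flip of the coordinates in $B_0$ — as one sees by composing $\sigma$ with the translation $A\mapsto A\,\triangle\,\sigma(\emptyset)$, after which $\emptyset$ is fixed and $\sigma$ is determined by its action on singletons. This group is too large for the claim, since it contains the $2^n$ translations, so one must rule out the unwanted ones. Composing $\sigma$ with a suitable symmetry induced by a permutation of $X$ leaves a pure flip $T\colon A\mapsto A\,\triangle\,B_0$; since such symmetries and $\sigma$ all preserve the relation ``comparable and two or more elements apart,'' so does $T$. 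But if $\emptyset\neq B_0\neq X$, choosing $i\in B_0$ and $j\in X\setminus B_0$ gives the comparable, two-apart pair $\emptyset\subsetneq\{i,j\}$ whose images $B_0$ and $\{i,j\}\,\triangle\,B_0$ are incomparable, a contradiction. Hence $B_0=\emptyset$, making $\sigma$ a symmetry induced by a permutation of $X$, or $B_0=X$, in which case $T$ is the reflection $A\mapsto X\setminus A$ and $\sigma$ is such a symmetry composed with a reflection — which, since reflection commutes with these symmetries, is the form asserted by the claim.

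The main obstacle is this last point. ``Respecting marked diamonds'' only lands $\sigma$ inside $\mathrm{Aut}(Q_n)$, which contains far too much — all $2^n$ coordinate translations — and the substance of the claim is that the extra data carried by the markings, equivalently the knowledge of which comparable pairs are ``two apart,'' is exactly strong enough to kill every translation except the identity and the all-coordinates flip, the latter being reflection. The remaining ingredients (the bijection between the $\delta$-vectors and the diamonds, the description of $\mathrm{Aut}(Q_n)$, and the order-theoretic bookkeeping behind the two facts) are routine.
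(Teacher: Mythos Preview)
Your proof is correct and takes a genuinely different route from the paper's. The paper argues by direct counting: it observes that among all the inequalities in (\ref{eq:2}) only $\emptyset$ and $X$ attain the maximal ``right-hand-side occurrence count'' $(3^n{+}1)/2-2^n$, so $\phi$ must map $\{\emptyset,X\}$ to itself; after an optional reflection one has $\phi(\emptyset)=\emptyset$, singletons are then characterised as the only subsets that never occur next to $\emptyset$ on a right-hand side, two-element subsets are pinned down by the inequalities with $K=\emptyset$, and the remaining values of $\phi$ are forced by the same style of incidence bookkeeping. You instead extract from the hypothesis the order-theoretic statement that $\sigma$ sends marked diamonds to marked diamonds, deduce that $\sigma$ preserves both comparability and the relation ``comparable and at least two apart,'' hence the covering relation, land inside $\mathrm{Aut}(Q_n)\cong S_n\ltimes(\mathbb Z/2)^n$, and finish by showing that the only coordinate translations $A\mapsto A\,\triangle\,B_0$ compatible with the second relation are $B_0=\emptyset$ and $B_0=X$. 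The paper's argument is shorter and entirely self-contained; yours is more structural, makes the role of the hypercube automorphism group explicit, and isolates precisely which piece of the diamond data (the ``two-or-more-apart'' relation) kills the unwanted $2^n$ translations---information the paper's counting argument leaves implicit.
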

\begin{proof}
Let a permutation of the subsets of $X$ be defined by the bijection $\phi$,
and rewrite the inequalities in (\ref{eq:2}) by mapping each subset $A$ to $\phi(A)$.
For each subset, count how many times it occurs on the right-hand side of the inequalities.
There will be two with the maximal count $(3^n{+}1)/2-2^n$; which means they must be
$\emptyset$ and $X$, but also $\phi(\emptyset)$ and $\phi(X)$ in some order.
If $\phi(\emptyset)=X$, apply a reflection, and assume $\phi(\emptyset)=\emptyset$ going forward.
On the right hand side of the inequalities every subset
occurs next to the empty set, except for singletons.
This means $\phi$ is a bijection on singletons, which determines the permutation of the base set
that will give rise to $\phi$.
Each pair of singletons occurs exactly once on the left hand side, with the corresponding
right-hand side containing the empty set and the union of the two singletons.
This means that $\phi$ for two-element subsets is uniquely determined by its values on singletons.
The same logic for larger subsets yields that $\phi$ is also determined on all remaining subsets of $X$.
\end{proof}

A permutational symmetry $\pi$ automatically maps to a symmetry of the
cone $\mathcal C$. It is because $\pi$ induces a permutation of the coordinate
set $\mathcal R$ defined in (\ref{eq:R}), which, in turn, induces a
permutation of the rows of the matrix $M$ so that for each row
$\delta$ of $M$ and each vector $g$ with coordinates in
$\mathcal R$ we have
$$
    \delta\cdot g = \pi(\delta)\cdot \pi(g).
$$
In particular, $M g\ge 0$ if and only if $M \pi(g)\ge
0$, thus $\mathcal C = \pi(\mathcal C)$.

The case of reflection is more subtle as $f^\c$ is not
necessarily $p$-standardized. To get the reflected pair $\sigma f$ of
the $p$-standardized submodular function $f$ one has to shift $f^\c$ by the
modular function
$$
    m(A) = -f(X) + {\textstyle\sum}\{ f(i): i \in A\,\}
$$
(see part a) of Claim \ref{claim:1}) to get the $p$-standardized submodular
function
\begin{equation}\label{eq:dual}
    (\sigma f)(A) = f(X\setm A)-f(X) + {\textstyle\sum}\{ f(i): i \in A\,\}.
\end{equation}
In matroid and polymatroid parlance \cite{oxley} $\sigma f$ is called the
\emph{dual} of $f$. Reflection also induces a permutation on the rows of
$M^\sharp$ (and then on rows of $M$) as the reflected image of the
elementary inequality $(i,j|K)$ is the elementary inequality $(i,j|X\setm 
ijK)$. This permutation of inequalities, denoted again by $\sigma$, also satisfies the
exchange property
$$
    \delta \cdot g = \sigma(\delta)\cdot\sigma(g).
$$
Consequently $\mathcal C=\sigma(\mathcal C)$, thus $\sigma$ is another symmetry
of the cone $\mathcal C$.

\smallskip

All of these symmetries induce a linear transformation on the coordinates, consequently
they preserve extremality. Symmetric images of a fixed extremal ray form an
\emph{orbit}. As the symmetry transformations can be computed trivially, it suffices to
compute one ray from each orbit. Since reflection is idempotent, there are
$2n!$ many symmetries. This means a significant reduction as we expect the
majority of the orbits to contain $2n!$ many different
rays. Our computations confirm that this is indeed the case, see Figure \ref{fig:6}.

\subsection{Extending the base set}\label{subsec:extending}

Suppose the base set $X$ is extended by a new element $y$ to the larger set
$Y=Xy$. We extend a function $f$ defined on the subsets of $X$ to a
function $f^\star$ defined on all subsets of $Y$ by
$$
   f^\star(yA)=f^\star(A) = f(A) ~~\mbox{for $A\subseteq X$},
$$
in particular, $f^\star(y)=f(\emptyset)$.
Observe that if $f$ is a $p$-standardized submodular function, then so is
$f^\star$. Consequently the map $g \mapsto g^\star$ embeds the cone
$\mathcal C_X$ of reduced, $p$-standardized submodular functions over $X$
into the cone $\mathcal C_Y$.

\begin{claim}\label{claim:exex}
Suppose $r$ is an extremal ray in $\mathcal C_X$. Then $r^\star$ is an
extremal ray in $\mathcal C_Y$.
\end{claim}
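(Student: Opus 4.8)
The plan is to argue by contradiction, using the characterization of extremal rays as intersections of facets together with the combinatorial structure of the elementary inequalities over $X$ versus over $Y=Xy$. Suppose $r$ is extremal in $\mathcal C_X$ but $r^\star$ is not extremal in $\mathcal C_Y$. Then $r^\star$ can be written as a conic combination $r^\star = g_1 + g_2$ with $g_1, g_2 \in \mathcal C_Y$ not proportional to $r^\star$. The idea is to ``project'' $g_1$ and $g_2$ back down to $\mathcal C_X$ and derive a nontrivial decomposition of $r$, contradicting its extremality. The natural projection sends a $p$-standardized submodular $h$ on $2^Y$ to its restriction $h\!\restriction_{2^X}$; one first checks this restriction is again $p$-standardized and submodular over $X$ (submodularity is inherited by restricting to the sublattice $2^X$, and $p$-standardization over $X$ needs $h(X)=h(X\setminus i)$ for $i\in X$, which must be extracted from the $Y$-tightness conditions).

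The key step, and the one I expect to be the main obstacle, is showing that this projection does not collapse the decomposition, i.e. that $r$ does not become proportional to the projections of both $g_1$ and $g_2$. Equivalently, one must show that $r^\star$ lies on an extremal ray of $\mathcal C_Y$ by identifying enough facets of $\mathcal C_Y$ through $r^\star$. Here is the cleaner route I would actually take: recall that $r$ extremal in $\mathcal C_X$ means the facets of $\mathcal C_X$ containing $r$ — i.e. the elementary inequalities $(i,j|K)$ with $i,j\in X$, $K\subseteq X\setminus ij$, that $r$ satisfies with equality — have rank $2^n - n - 1$ (one less than full). The strategy is to exhibit, among the elementary inequalities over $Y$ that $r^\star$ satisfies with equality, a set of rank $2^{n+1} - (n+1) - 1 - 1 = 2^{n+1} - n - 3$, one less than the dimension of $\mathcal C_Y$. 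These tight inequalities come in several families: (a) the inequalities $(i,j|K)$ with $i,j,K\subseteq X$, which are tight for $r^\star$ exactly when the corresponding $X$-inequality is tight for $r$; (b) inequalities involving the new element $y$, which are tight because of the defining relation $f^\star(yA)=f^\star(A)=f(A)$ — for instance $(y,j\,|\,K)$ for $K\subseteq X$ reads $f^\star(yK)+f^\star(jK)-f^\star(K)-f^\star(yjK) = f(K)+f(jK)-f(K)-f(jK)=0$, so every such inequality is tight for $r^\star$. Family (b) is large and its span, together with family (a), should cover the required corank-$1$ condition.

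So the concrete steps are: (1) verify $f\mapsto f^\star$ maps $\mathcal C_X$ into $\mathcal C_Y$ (already noted in the text preceding the claim); (2) identify the elementary $Y$-inequalities that are automatically tight for every vector of the form $f^\star$, and compute the rank of their span — call this the ``star subspace'' rank; (3) show that adding the pullbacks of the tight $X$-inequalities of $r$ raises the rank to exactly $\dim\mathcal C_Y - 1$; (4) conclude that the set of facets of $\mathcal C_Y$ through $r^\star$ has corank $1$, hence their intersection is one-dimensional, hence $r^\star$ is an extremal ray. The main obstacle is precisely the rank bookkeeping in steps (2)–(3): one must be careful that the ``new'' tight inequalities involving $y$ contribute exactly the right number of independent constraints and do not overcount against the $2^n - n - 1$ constraints inherited from $r$'s facets. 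A clean way to handle this is to observe that the pullback/pushforward pair $(h\mapsto h\!\restriction_{2^X},\; g\mapsto g^\star)$ gives a linear section of the projection $\mathcal C_Y \to \mathcal C_X$ whose image is a face of $\mathcal C_Y$ — identifying this face explicitly as $\{g^\star : g\in\mathcal C_X\}$ and noting that faces of a pointed cone preserve extremality of rays then finishes the argument without an explicit rank computation.
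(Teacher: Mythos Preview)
Your final ``face'' idea is the right one, and it is essentially what the paper does --- but you left the one nontrivial step unproved, and that step is the whole content of the claim. The paper's argument is short and direct: write $r^\star=\sum_i\lambda_i s_i$ with $\lambda_i>0$ and $s_i\in\mathcal C_Y$. Since $r^\star(y)=0$ and every $p$-standardized submodular function is nonnegative (Claim~\ref{claim:2}), each $s_i(y)=0$. Then monotonicity and submodularity give, for every $A\subseteq X$,
\[
0\le s_i(Ay)-s_i(A)\le s_i(y)-s_i(\emptyset)=0,
\]
so $s_i(Ay)=s_i(A)$, i.e.\ $s_i=r_i^\star$ for some $r_i\in\mathcal C_X$. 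Linearity of $g\mapsto g^\star$ turns the decomposition into $r=\sum_i\lambda_i r_i$, and extremality of $r$ finishes. In your language this shows exactly that $\{g^\star:g\in\mathcal C_X\}=\{h\in\mathcal C_Y:h(y)=0\}$ is a face of $\mathcal C_Y$; you asserted this but did not supply the argument.

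Your first-paragraph projection route has a genuine gap that is only repaired by the same trick. Restricting an arbitrary $h\in\mathcal C_Y$ to $2^X$ is submodular but \emph{not} $p$-standardized over $X$: $p$-standardization on $Y$ gives $h(Y\setminus i)=h(Y)$ for $n$-element subsets, which says nothing about $h(X\setminus i)$ versus $h(X)$ for $(n{-}1)$-element subsets. So the projection does not land in $\mathcal C_X$ without first forcing $g_k(y)=0$ and then $g_k(Ay)=g_k(A)$ --- precisely the monotonicity/submodularity step above.

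The rank-counting route in your middle paragraphs can be made to work, but it is considerably heavier than necessary: you would need to track three families of tight inequalities (those of type $(i,j\mid K)$, $(i,j\mid yK)$, and $(y,j\mid K)$) and do careful bookkeeping across the coordinate split, whereas the paper's two-line inequality chain avoids all of that.
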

\begin{proof}
Suppose $r^\star$ is a positive conic combination of rays $s_i$ in $\mathcal
C_Y$, that is, $r^\star = \sum_i \lambda_i s_i$ where all $\lambda_i$ are
positive. We must show that $s_i$ is a multiple of $r^\star$. Since
$r^\star(y)=0$, we have $s_i(y)=0$. By Claim \ref{claim:2} the submodular
function represented by $s_i$ is monotone and pointed, thus for every 
$A\subseteq X$
$$ 
    0 \le s_i(Ay)-s_i(A)\le s_i(y)-s_i(\emptyset) = 0.
$$
This means that there is an $r_i$ in $\mathcal C_X$ such that $s_i=r_i^\star$,
and then $r^\star=\sum_i \lambda_i r_i^\star$. Since $r\mapsto
r^\star$ is a linear map, it implies $r=\sum_i \lambda_i r_i$. By assumption
$r$ is extremal, thus $r_i$ is a multiple of $r$, and then $s_i=r_i^\star$
is a multiple of $r^\star$, as required.
\end{proof}

The embedding $g\mapsto g^\star$ of $\mathcal C_X$ into $\mathcal C_Y$ also
preserves the symmetries of $\mathcal C_X$. Denote the duality symmetry on
$X$ and $Y$ by $\sigma_X$ and $\sigma_Y$, respectively. Similarly, extend
the permutation $\pi_X$ on $X$ to a permutation $\pi_Y$ on $Y$ by keeping
its effects on $X$ and stipulating $\pi_Y(y)=y$.

\begin{claim}\label{claim:exsym}
For any $g\in\mathcal C_X$, $(\sigma_X g)^\star = \sigma_Y g^\star$, and 
$(\pi_X g)^\star = \pi_Y g^\star$.
\end{claim}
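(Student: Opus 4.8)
The plan is to verify both identities pointwise. Each side of each equation is a real‑valued function on $2^Y$, so it suffices to check that the two functions agree on an arbitrary $B\subseteq Y$, and every such $B$ is uniquely of the form $A$ or $Ay$ with $A\subseteq X$. Throughout I identify $g\in\mathcal C_X$ with the $p$‑standardized submodular function it represents (its expansion), as is done implicitly in Section~\ref{subsec:extending} and in the proof of Claim~\ref{claim:exex}. The three elementary facts I will lean on are: $g^\star$ is again $p$‑standardized on $Y$ (noted in Section~\ref{subsec:extending}); $g^\star(y)=g(\emptyset)=0$; and $g^\star(Y)=g^\star(yX)=g(X)$.

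The permutation identity is the easier of the two. Since $\pi_Y$ fixes $y$, for $A\subseteq X$ one has $\pi_Y A=\pi_X A\subseteq X$ and $\pi_Y(Ay)=(\pi_X A)y$. Unwinding the definitions, $(\pi_Y g^\star)(A)=g^\star(\pi_X A)=g(\pi_X A)=(\pi_X g)(A)=(\pi_X g)^\star(A)$, and likewise $(\pi_Y g^\star)(Ay)=g^\star((\pi_X A)y)=g(\pi_X A)=(\pi_X g)(A)=(\pi_X g)^\star(Ay)$, using at each step that $\star$ ignores $y$. This settles $(\pi_X g)^\star=\pi_Y g^\star$.

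For the duality identity I would substitute $B=A$ and $B=Ay$ into the defining formula \eqref{eq:dual} written for $Y$, namely $(\sigma_Y g^\star)(B)=g^\star(Y\setm B)-g^\star(Y)+\sum\{g^\star(j):j\in B\}$. When $B=A\subseteq X$ we have $Y\setm B=(X\setm A)y$, hence $g^\star(Y\setm B)=g(X\setm A)$; when $B=Ay$ we have $Y\setm B=X\setm A$, hence again $g^\star(Y\setm B)=g(X\setm A)$. In both cases $g^\star(Y)=g(X)$, and the modular term collapses to $\sum\{g(i):i\in A\}$, because the only extra possible summand, $g^\star(y)$, is $0$. Therefore both $(\sigma_Y g^\star)(A)$ and $(\sigma_Y g^\star)(Ay)$ equal $g(X\setm A)-g(X)+\sum\{g(i):i\in A\}=(\sigma_X g)(A)$, which by the definition of $\star$ is exactly $(\sigma_X g)^\star(A)$ and $(\sigma_X g)^\star(Ay)$, respectively. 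This gives $(\sigma_X g)^\star=\sigma_Y g^\star$.

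The computations are entirely routine substitutions; the only point requiring a moment's care is the bookkeeping of the modular shift term $\sum\{g^\star(j):j\in B\}$ across the two cases $y\in B$ and $y\notin B$, and this is precisely where the pointedness of the $p$‑standardized function $g$ — equivalently $g^\star(y)=0$ — is used to discard the stray term. I do not anticipate any genuine obstacle beyond keeping the two case splits (by $B$ and, independently, by whether $y\in B$) straight.
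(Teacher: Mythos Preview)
Your proof is correct and follows essentially the same route as the paper: both verify the identities pointwise by unwinding the definition of $\sigma$ from \eqref{eq:dual} and using $g^\star(Y)=g(X)$, $g^\star(Y\setm B)=g(X\setm B)$, and $g^\star(y)=0$. The paper packages the case split slightly more compactly by stating these three identities once for all $B\subseteq Y$, whereas you treat $B=A$ and $B=Ay$ separately, but the argument is the same.
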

\begin{proof}
Clearly we have $g^\star(Y)=g^\star(X)=g(X)$, and also for all subsets 
$A\subseteq Y$, $g^\star(Y\setm A)=g(X\setm A)$ and 
$$
   \textstyle \sum\{g^\star(i):i\in A\} = \sum\{g(i):i\in A\setm y\}.
$$
From the above and (\ref{eq:dual}), for $A\subseteq X$ we get
\begin{align*}
    & (\sigma_Y g^\star)(A) = (\sigma_Y g^\star)(Ay) ={}\\
    &~~{}=g(X\setm A)+\sum\{g(i):i\in A\} = (\sigma_X g)(A),
\end{align*}
as was claimed. The second statement can be checked analogously.
\end{proof}

According to Claim \ref{claim:exex}, extremal submodular functions on an
$n+1$-element base contain the extremal submodular functions on a base with
$n$ elements---and then, by induction, all extremal submodular functions
on smaller bases. Claim \ref{claim:exsym} strengthens this result: if,
instead, we consider orbits, then every orbit on a smaller base occurs
exactly once as an orbit on a larger base. For example, taking those orbits 
for $n=6$
which have a representative vanishing on some
singleton, we get exactly the 672 orbits of $n=5$, see Table \ref{table:1}.

\section{Methods for generating extremal rays}\label{sec:eer}

This section gives an overview of some methods from polyhedral geometry
\cite{fukuda20,joswig13} which were used in our computations. Recall that the
problem of finding all extremal submodular functions was reduced to the task
of finding all extremal rays of a pointed, full-dimensional, polyhedral cone.
For this exposition the dimension of the cone is denoted by $d$, and the cone is the
intersection of $m\ge d$ irredundant positive halfspaces specified as
$$
    \mathcal C = \{ x\in \R^d: Mx \ge 0 \},
$$
where the matrix $M$ has $d$ columns, $m$ rows, and has rank $d$. Irredundant
means that no proper subset of the rows of $M$ define the same cone. In other
words, the hyperplanes 
determined by the rows of $M$ are the bounding facets (that is,
$d-1$-dimensional faces) of $\mathcal C$.

The \emph{ray} generated by a non-zero point (or vector) $x\in\R^d$ is
$r=\{\lambda x: \lambda\ge 0\}$. By abusing the notation a ray is identified
with one (or any) of its generating points. Thus we write $r\in\mathcal C$
to mean that all points of $r$ are in $\mathcal C$, and $Mr\ge 0$ to mean
that this relation holds for some non-zero (and then all) points of $r$.

The ray $r$ is \emph{extremal} in $\mathcal C$ if it is a one-dimensional
face of $\mathcal C$. Recalling the definition of polyhedral faces
\cite{ziegler}, $r$ is extremal iff it is not a strictly positive conic
combination of other rays of $\mathcal C$.

The \emph{support} of the ray $r\in\mathcal C$, denoted by $\mofr$, is the set
of those bounding hyperplanes of $\mathcal C$ the ray is on, namely $\mofr =
\{ a\in M: a\cdot r=0 \}$. Worded differently, $\mofr$ is the set of active
constraints for $r$, or the set of facets of $\mathcal C$ containing $r$.
The following claim follows easily from the definitions, see also
\cite{fukuda20,ziegler}.

\begin{claim}\label{claim:rank1}
The ray $r\in\mathcal C$ is extremal if and only if $\mofr$ has rank $d-1$, 
and then $r$ is the unique one-dimensional solution of the homogeneous
system $(\mofr)\, x = 0$. \qed
\end{claim}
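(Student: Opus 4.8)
The plan is to prove the two implications of the biconditional separately and then read off the uniqueness clause as a consequence. I would start with the elementary remark that, since $r\neq\zerovec$ and $a\cdot r=0$ for every $a\in\mofr$, the line $\{\lambda r:\lambda\in\R\}$ is contained in the solution space of the homogeneous system $(\mofr)\,x=0$; hence this space has dimension at least $1$, so $\mofr$ has rank at most $d-1$, with equality exactly when the solution space is precisely that line. This observation is what ties the rank condition to the final ``and then'' part of the statement.

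For the direction ``rank $d-1\Rightarrow$ extremal'', suppose $r=\lambda_1 s_1+\lambda_2 s_2$ with $\lambda_1,\lambda_2>0$ and $s_1,s_2\in\mathcal C$ (a general strictly positive conic combination reduces to this case by grouping). For each active row $a\in\mofr$ we then have $0=a\cdot r=\lambda_1(a\cdot s_1)+\lambda_2(a\cdot s_2)$; since $a\cdot s_i\ge 0$ by $s_i\in\mathcal C$ and $\lambda_i>0$, both summands vanish. Thus $s_1$ and $s_2$ lie in the solution space of $(\mofr)\,x=0$, which by hypothesis is the line through $r$, so each $s_i$ is a multiple of $r$. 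Hence $r$ is not a strictly positive combination of rays other than itself, i.e.\ $r$ is extremal.

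For the converse I would argue by contraposition: assume $\mofr$ has rank at most $d-2$. Then its solution space has dimension at least $2$, so there is a vector $y$ in it that is not a scalar multiple of $r$. Every inactive row $a\in M\setm\mofr$ satisfies $a\cdot r>0$ strictly, and since $M$ has finitely many rows we may choose a single $\epsilon>0$ small enough that $a\cdot(r\pm\epsilon y)>0$ for all such $a$; for the active rows $a\cdot(r\pm\epsilon y)=a\cdot r=0$. Hence $r+\epsilon y$ and $r-\epsilon y$ both lie in $\mathcal C$, neither is a multiple of $r$, and $r=\tfrac12\big((r+\epsilon y)+(r-\epsilon y)\big)$, so $r$ is not extremal.

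Combining the two implications gives the equivalence, and the first paragraph then yields that whenever $r$ is extremal the solution space of $(\mofr)\,x=0$ is exactly the line spanned by $r$, which is the uniqueness claim. The only point requiring a little care is the perturbation step: one must invoke the finiteness of the row set together with the \emph{strict} inequality $a\cdot r>0$ on inactive rows to obtain a single $\epsilon$ that works for all of them simultaneously. Everything else is linear-algebra bookkeeping.
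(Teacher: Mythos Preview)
Your argument is correct in every step. Note that the paper does not actually supply a proof of this claim: it is stated with a terminal \qed{} and the surrounding text says only that it ``follows easily from the definitions, see also \cite{fukuda20,ziegler}.'' Your write-up is precisely the standard proof one finds in those references: the ``rank $d-1\Rightarrow$ extremal'' direction via the face property (nonnegative terms summing to zero must each vanish), and the converse via a small perturbation inside the kernel of the active constraints. The only cosmetic point is that in the sentence ``for the active rows $a\cdot(r\pm\epsilon y)=a\cdot r=0$'' you are silently using $a\cdot y=0$ for $a\in\mofr$, which holds because $y$ was chosen in the solution space of $(\mofr)\,x=0$; it might be worth making that explicit.
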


The number of facets $r$ is on is called its \emph{weight} and is denoted by
$w(r)$; this number is called \emph{incidence number} in \cite{sikiric07}.
Clearly, $w(r)$ is the number of rows in $\mofr$. If $\mofr$ has rank $d-1$,
then it must have at least $d-1$ rows. Consequently extremal rays have
weight $w(r)\ge d-1$.

\subsection{The Double Description Method}\label{subsec:DD}

A polyhedral cone is determined both by its bounding hyperplanes arranged
into the matrix $M$, and by the set of its extremal rays arranged into the
matrix $R$. Several algorithms and implementations exist for enumerating the
rows of $R$ given the matrix $M$, see \cite{howgood,joswig13}.
Interestingly, the converse problem, namely enumerating the bounding
hyperplanes given the set of extremal rays, is a completely equivalent
problem. This is because if the rows of $R$ are interpreted as hyperplanes,
then the cone they generate has exactly $M$ as the set of its extremal rays,
see, e.g., \cite{fukuda20,ziegler}.

The ray enumeration algorithm that fits the highly degenerate case of
submodular functions best is the iterative \emph{Double Description Method},
abbreviated as DD. It was described first by Motzkin et al.~\cite{motzkin};
for a recent overview see \cite{fukuda-prodon}. DD is a variant of the
Fourier-Motzkin elimination, or Chernikova's algorithm, and is sketched as
Code \ref{code:1}. It starts with a subset $M_0$ of $d$ inequalities from
$M$ which defines a full-dimensional cone with $d$ extremal rays. In an
intermediate step we have a cone determined by the inequalities (or facets)
in $M_i\subseteq M$, and also we have the complete list $R_i$ of its
extremal rays. This way each intermediate cone has double description,
giving the name of the method. To obtain the next cone, a new inequality $a$
is added to $M_i$, that is, part of the cone is cut off by the hyperplane
corresponding to $a\ge 0$. This hyperplane partitions the old rays in $R_i$
into positive, negative, and zero ones depending on whether the ray is on
the positive side of the hyperplane, on the negative side, or it is on the
hyperplane. Positive and zero rays remain extremal in the next cone;
negative rays are part of the removed segment, and so are not part of the
new cone. All additional extremal rays are on the cutting hyperplane. These
are the rays where the conic span of an \emph{adjacent pair} of a positive
and a negative ray intersects this hyperplane; for details, see
\cite[Section 8.1]{fukuda20}. The algorithm terminates when there are no
more inequalities to add.

\begin{pseudocode}{The Double Description Method\label{code:1}}
\State Compute the initial DD pair $(M_0,R_0)$; $i\gets 0$\label{c1-l1}
\While{there is $a\in M\setm M_i$}
  \State $M_{i+1} \gets M_i\cup\{a\}$\label{c1-l3}
  \State Split $R_i$ into positive / negative / zero rays\label{c1-l4}
  \State $R_{i+1}\gets{}$ positive and zero rays
  \ForEach{$r_1$ positive / $r_2$ negative ray}\label{c1-l6}
  \If{$r_1$ and $r_2$ are adjacent }\label{c1-l7}
    \State Compute the ray $r={}$conic$(r_1,r_2)\cap a$\label{c1-l8}
    \State $R_{i+1}\gets R_{i+1}\cup\{r\}$
  \EndIf
  \EndFor\label{c1-l11}
  \State $i\gets i+1$
\EndWhile
\end{pseudocode}

The induction step (lines \ref{c1-l3} to \ref{c1-l11} of the outlined DD
algorithm) was implemented as a stand-alone program working as a
\emph{pipe}. It reads the DD pair $(M_i,R_i)$ together with the new
inequality $a\in M$, and produces the next DD pair $(M_{i+1},R_{i+1})$ which
is amenable for the next iteration. The initial DD pair (line \ref{c1-l1})
is created by the controlling program using simple linear algebraic tools
\cite{strang23}. The overhead caused by the pipe arrangement (shipping the
data in and out of memory at each iteration) is more than compensated for by
simpler data structure, smaller memory requirement, and the ability to
allocate all necessary memory in a single request (thus avoiding memory
fragmentation). As a bonus, almost linear speed-up can be achieved by
distributing the computation made by the pipe program among several cores
or, preferably, over several machines. Figure \ref{fig:3} gives an
illustration of the general performance by plotting the total number of rays
against the speed of ray generation. (Actually, the figure is for the
equivalent problem of enumerating adjacent rays as discussed in Section
\ref{subsec:neighbor}.)

The crucial part of the DD algorithm is testing the adjacency of extremal
rays of the current cone (line \ref{c1-l7} in Code \ref{code:1}). This
ensures that only extremal rays of the next iteration are added to the
output. The following equivalent criteria for ray adjacency are from
\cite{fukuda-prodon}.
\begin{claim}[Fukuda--Prodon \cite{fukuda-prodon}]\label{claim:adjacency}
For two extremal rays $r_1$ and $r_2$ the following conditions are equivalent.
\begin{itemz}
\item[\upshape a)] $r_1$ and $r_2$ are adjacent;
\item[\upshape b)] the rank of $\mof{r_1}\cap \mof{r_2}$ is $d-2$ (algebraic test);
\item [\upshape c)] there is no extremal ray $r$ apart form $r_1$ and $r_2$
such that $\mofr\supseteq \mof{r_1}\cap \mof{r_2}$ (combinatorial test). \qed
\end{itemz}
\end{claim}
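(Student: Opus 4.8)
The strategy is to prove the cycle of implications a) $\Rightarrow$ b) $\Rightarrow$ c) $\Rightarrow$ a), since the equivalence of the algebraic and combinatorial characterizations is most naturally threaded through the geometric definition of adjacency. Throughout I will use that every proper face of $\mathcal C$ is the intersection of the facets containing it, and that a ray $r$ is extremal exactly when $\mofr$ has rank $d-1$ (Claim \ref{claim:rank1}). Set $N = \mof{r_1}\cap\mof{r_2}$; this is the set of facets containing both rays, hence the set of facets containing the smallest face $F$ spanned by $r_1$ and $r_2$, and $F = \{ x\in\mathcal C : Nx = 0\}$.

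For a) $\Rightarrow$ b): if $r_1,r_2$ are adjacent then $F$ is a two-dimensional face, so it lies in a linear subspace of dimension $2$; since $F$ is cut out of $\mathcal C$ by the equalities $Nx=0$ and $F$ has affine dimension $2$, the rank of $N$ is exactly $d-2$ (it cannot be larger, or $F$ would be lower-dimensional; it cannot be smaller, since $r_1$ and $r_2$ are distinct extremal rays and $\mof{r_1}$ already has rank $d-1$, so adjoining one generic missing equality from $\mof{r_1}\setminus\mof{r_2}$ pins down $r_1$ — the bookkeeping here is the routine part). For b) $\Rightarrow$ c): if $\operatorname{rank} N = d-2$, then the solution set of $Nx=0$ inside $\mathcal C$ is a face of dimension at most $2$ containing the two distinct rays $r_1,r_2$, hence exactly $2$; a two-dimensional cone has precisely two extremal rays, so no third extremal ray $r$ can satisfy $\mofr\supseteq N$. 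For c) $\Rightarrow$ a), I argue the contrapositive: if $F$ is not two-dimensional it has dimension $\ge 3$ (it contains two independent rays, so $\dim F\ge 2$, and $=2$ would make $r_1,r_2$ adjacent); a pointed cone of dimension $\ge 3$ has at least three extremal rays, and each of them is an extremal ray of $\mathcal C$ whose support contains all facets through $F$, i.e. contains $N$ — contradicting c).

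The main obstacle is the careful rank bookkeeping in a) $\Rightarrow$ b): one must argue cleanly that $\operatorname{rank} N = d-2$ rather than merely $\le d-2$. The cleanest route is to observe that $r_1$ spans a face of $\mathcal C$, so $\mof{r_1}$ has rank $d-1$; since $F\supsetneq r_1$ is a strictly larger face, $N = \mof F \subsetneq \mof{r_1}$ is a strict subset, and each step of removing facets can drop the rank by at most one, giving $\operatorname{rank} N \ge d-2$; combined with $\dim F = 2 \Rightarrow \operatorname{rank} N \le d-2$, equality follows. The remaining pieces — that a pointed cone of dimension exactly $2$ has exactly two extremal rays, and one of dimension $\ge 3$ has at least three — are standard facts about pointed polyhedral cones that I would cite from \cite{fukuda20} or \cite{ziegler} rather than reprove.
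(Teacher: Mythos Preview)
The paper does not prove this claim; it is stated with a trailing \qed\ and attributed to Fukuda--Prodon, so there is no in-paper argument to compare against. Your cycle a) $\Rightarrow$ b) $\Rightarrow$ c) $\Rightarrow$ a) is the standard route, and your arguments for b) $\Rightarrow$ c) and c) $\Rightarrow$ a) are correct.

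There is one genuine gap, in the lower bound $\operatorname{rank} N \ge d-2$ for a) $\Rightarrow$ b). You write that $N \subsetneq \mof{r_1}$ and that ``each step of removing facets can drop the rank by at most one,'' concluding $\operatorname{rank} N \ge d-2$. But $\mof{r_1}\setminus N$ may contain many rows, so removing them one at a time only gives $\operatorname{rank} N \ge (d-1) - |\mof{r_1}\setminus N|$, which is useless. The earlier remark about ``adjoining one generic missing equality'' is likewise not a proof. The clean fix is to invoke the standard fact (valid because $\mathcal C$ is full-dimensional and the rows of $M$ are irredundant, both stipulated at the opening of Section~\ref{sec:eer}) that for every face $F$ of $\mathcal C$ the affine hull of $F$ equals $\{x : \mof{F}\,x = 0\}$, whence $\dim F = d - \operatorname{rank}\mof{F}$. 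Then $\dim F = 2$ gives $\operatorname{rank} N = d-2$ in one stroke, with no two-sided bounding needed; this is exactly the identity underlying Claim~\ref{claim:rank1} one dimension up, and is in \cite{ziegler}.
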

If we write $w(r_1,r_2)$ to mean the number of rows in $\mof{r_1}\cap
\mof{r_2}$, then the algebraic test requires computing the rank of a matrix
with $d$ columns and $w(r_1,r_2)$ rows. The naive implementation requires
$O(d^2w(r_1,r_2))$ arithmetic operations, independently of the number of
extremal rays. As this computation is sensitive to numerical errors, the
rank is frequently computed using arbitrary precision arithmetic, which can
be very slow even for small values of $d$. Conversely, the combinatorial
test runs in time proportional to the number of extremal rays, potentially a
very large number. However, this test can be implemented using fast bit
operations on the ray--facet incidence matrix, and is not prone to numerical
errors. Both tests can be sped up using a simple consequence of the
algebraic test: $r_1$ and $r_2$ are definitely \emph{not} adjacent if
$w(r_1,r_2)$ is smaller than $d-2$. This simple condition should be checked
before delving into any of the two more demanding tests.

\subsection{Enumerating neighbors}\label{subsec:neighbor}

Another avenue to finding extremal rays of a cone is to find extremal rays
adjacent to a known extremal ray \cite{bremner09,Plos17,rehn10,sikiric07},
also known as \emph{Adjacency Decomposition}. Since extremal rays of a cone
are connected with respect to the adjacency relation \cite[Theorem
3.14]{ziegler}, starting from any extremal ray of the cone and determining
its neighbors, then the neighbors of these rays, and so on, will eventually
generate all extremal rays.

For the details let $r$ be such a fixed extremal ray of the cone $\mathcal
C=\{x\in\R^d: Mx\ge 0\}$. Enumerating the neighbors of $r$ can be done by
solving a generic ray enumeration problem in dimension $d-1$. To show that
this is the case, let $M'=\mofr$ be the set of facets $r$ is on, and let
$z\in M\setm M'$ be any of the remaining facets of $\mathcal C$. Clearly
$M'r = 0$ and $z\cdot r > 0$ as $z$ is a bounding facet of $\mathcal C$, and
therefore $r$ is on the positive side of $z$. Consider the following
$(d-1)$-dimensional cone $\mathcal C'$ embedded into the $d$-dimensional
space $\R^d$:
$$
   \mathcal C' = \{ x\in \R^d: M'x\ge 0 \mbox{ and } z\cdot x=0 \}.
$$
Since $r$ is extremal, $M'$ has rank $d-1$. Since $M'r=0$ and $z\cdot r\ne
0$, $z$ is not in the linear span of the rows of $M'$, therefore the
composite matrix ${M'\choose z}$ has rank $d$, and then $\mathcal C'$ is
$(d-1)$-dimensional, as was claimed. The analog of Claim \ref{claim:rank1}
remains valid: a ray $s\in\mathcal C'$ is extremal if and only if
$\matrixrestr{M'}{s}$ has rank $d-2$. Similarly, both tests for adjacency of
extremal rays stated in Claim \ref{claim:adjacency} remain true for
$\mathcal C'$ if $d-2$ is replaced by $d-3$. Therefore the Double
Description Method with some minor modifications can be used to enumerate
the extremal rays of the cone $\mathcal C'$.

\begin{claim}
Extremal rays of $\mathcal C'$ and the neighboring rays of $r$ are in a
one-to-one correspondence.
\end{claim}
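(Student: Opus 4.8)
The plan is to establish the claimed bijection by producing explicit maps in both directions and checking they are mutually inverse. Let $s$ range over extremal rays of $\mathcal{C}'$ and let $t$ range over extremal rays of $\mathcal{C}$ adjacent to $r$. The forward map will send a neighbour $t$ of $r$ to the ray $s = \mathrm{conic}(r,t)\cap\{x:z\cdot x=0\}$, i.e.\ the point where the two-dimensional face spanned by $r$ and $t$ meets the hyperplane $z\cdot x=0$; concretely $s = (z\cdot t)\,r - (z\cdot r)\,t$ up to positive scaling, which is well defined since $z\cdot r>0$ and, as I would check, $z\cdot t<0$ is impossible only if $t$ already lies on $z$, in which case $t$ itself lies in $\mathcal{C}'$ and we take $s=t$. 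The reverse map will send an extremal ray $s$ of $\mathcal{C}'$ to the unique extremal ray $t$ of $\mathcal{C}$, adjacent to $r$, that lies in the conic hull of $r$ and $s$.

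First I would verify the forward map is well defined, namely that $s\in\mathcal{C}'$ and $s$ is extremal there. Membership is immediate from $M's=0$-or-$\ge 0$: since $M'r=0$ and $M't\ge 0$, any nonnegative combination $\alpha r+\beta t$ satisfies $M'(\alpha r+\beta t)\ge 0$, and $s$ is by construction such a combination (after fixing signs so the coefficients are nonnegative); and $z\cdot s=0$ by design. For extremality in $\mathcal{C}'$, by the analog of Claim~\ref{claim:rank1} I must show $\matrixrestr{M'}{s}$ has rank $d-2$. The support of $s$ inside $M'$ is exactly $\mof{r}\cap\mof{t}$ restricted to $M'=\mof{r}$, which equals $\mof{r}\cap\mof{t}$; since $r$ and $t$ are adjacent in $\mathcal{C}$, part~b) of Claim~\ref{claim:adjacency} gives that this set has rank $d-2$. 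Together with the constraint $z\cdot x=0$ (which is independent of the rows of $M'$), this pins $s$ down to a one-dimensional solution set, so $s$ is extremal in $\mathcal{C}'$.

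Next I would build the reverse map. Given an extremal ray $s$ of $\mathcal{C}'$, consider the two-dimensional cone $\mathrm{conic}(r,s)$. Because $z\cdot r>0$ and $z\cdot s=0$, this two-dimensional cone crosses no other facet of $\mathcal{C}$ in its relative interior near $r$ — more carefully, one walks from $r$ along this plane staying inside $\mathcal{C}$ until the first facet of $\mathcal{C}$ is hit; that boundary ray is the desired $t$. I would argue $t$ is extremal in $\mathcal{C}$ by a support/rank count: $\mof{t}$ contains $\matrixrestr{M'}{s}$ (rank $d-2$) plus at least one more independent facet that is hit on the way out, giving rank $\ge d-1$, hence exactly $d-1$. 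Adjacency of $r$ and $t$ follows because $\mof{r}\cap\mof{t}\supseteq\matrixrestr{M'}{s}$ has rank $d-2$, which by part~b) of Claim~\ref{claim:adjacency} is exactly the adjacency criterion. Finally I would check the two maps compose to the identity in both directions: both are ``take the two-dimensional cone through $r$ and the given ray, and read off the other boundary ray in the respective cone,'' so $\mathrm{conic}(r,t)=\mathrm{conic}(r,s)$ whenever $s,t$ correspond, and uniqueness of the one-dimensional solution sets closes the loop.

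The main obstacle I anticipate is the degenerate-sign bookkeeping in the forward map: handling cleanly the case where a neighbour $t$ of $r$ already satisfies $z\cdot t=0$ (so $t\in\mathcal{C}'$ directly), versus $z\cdot t\ne 0$ where one must form the genuine combination and argue the resulting coefficients are nonnegative and not both zero. One must also make sure the ``first facet hit'' in the reverse map is unique, which could fail superficially if the two-dimensional cone exits $\mathcal{C}$ through a lower-dimensional face meeting several facets simultaneously; the rank argument above rules this out, but it needs to be stated with care rather than waved through. Everything else is a routine transfer of the rank characterisations of extremality and adjacency between $\mathcal{C}$ and its $(d-1)$-dimensional slice $\mathcal{C}'$.
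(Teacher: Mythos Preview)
Your proposal is essentially the same approach as the paper's proof: map a neighbour $t$ of $r$ to the combination of $r$ and $t$ lying on the hyperplane $z\cdot x=0$, and in the other direction push $s$ along the direction of $r$ until a facet in $M\setminus M'$ is first hit, then verify extremality and adjacency via the rank criteria of Claims~\ref{claim:rank1} and~\ref{claim:adjacency}.

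Your ``degenerate-sign bookkeeping'' worry is largely self-inflicted. The paper writes the forward image simply as $s=t-\lambda r$ with $\lambda=(z\cdot t)/(z\cdot r)$, and observes that $M's=M't-\lambda M'r=M't\ge 0$ because $M'r=0$; the sign of $\lambda$ is irrelevant, and the case $z\cdot t=0$ (so $\lambda=0$, $s=t$) falls out automatically. Once you drop the insistence that $s$ be a \emph{nonnegative} combination of $r$ and $t$, the sign analysis disappears. Similarly, in the reverse direction the paper writes $r'=s+\mu r$ and takes $\mu=\min_{a\in M\setminus M'}\bigl(-(a\cdot s)/(a\cdot r)\bigr)$; non-uniqueness of the minimising facet is harmless since $r'$ itself is uniquely determined, and any minimiser $a$ provides the extra independent row needed to lift the rank from $d-2$ to $d-1$.
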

\begin{proof}
We begin by describing the mapping from rays adjacent to $r$ to extremal
rays in $\mathcal C'$. Let $r_1$ be an extremal ray of $\mathcal C$ adjacent
to $r$. Let $s= r_1-\lambda r$ where $\lambda$ is chosen so that $z\cdot
s=0$. (Since $z\cdot r\ne 0$, such a $\lambda$ exists.) We show that $s$ is
an extremal ray of $\mathcal C'$. First, $s\in\mathcal C'$ as $z\cdot s=0$
and
$$
    M's = M'\cdot(r_1-\lambda r) = M' r_1 \ge 0
$$
as $r_1$ is a ray in $\mathcal C$. Second,
$\matrixrestr{M'}{s}=\matrixrestr{M'}{r_1}=\mof{r_1}\cap \mofr$. Since $r$
and $r_1$ are adjacent, the rank of this matrix is $d-2$, which proves that
$s$ is extremal by Claim \ref{claim:adjacency}. It is easy to see that no
two neighbors of $r$ are mapped to the same ray of $\mathcal C'$.

In the other direction, let $s\in\mathcal C'$ be extremal, and consider the
the ray $r'=s+\mu r$ for some real number $\mu$. Clearly,
$\mpofrp=\matrixrestr{M'}{s}$, and this matrix has rank $d-2$. If $a$ is a
row in $M'$, then $a\cdot r' = a\cdot s \ge 0$. If $a$ is in $M$ but not in
$M'$, then $a\cdot r$ is positive, and
$$
   a\cdot r' = a\cdot(s+\mu r) = a\cdot s + \mu (a\cdot r) \ge 0
$$
if and only if $\mu\ge -(a\cdot s)/(a\cdot r)$. Choosing $\mu$ as the
minimum of these values, $r'$ will be a ray in $\mathcal C$. Moreover,
$\mofrp$ will contain the row $a\notin M'$ where this minimum is taken:
$\mofrp \supseteq \mpofrp\cup\{a\}$. Since $\mpofrp$ has rank $d-2$ ($r'$ is
extremal in $\mathcal C'$), $\mofrp$ has rank $d-1$, proving that $r'$ is
extremal in $C$. Finally, $\mofr\cap \mofrp = \mpofrp$ has rank $d-2$, thus
$r$ and $r'$ are adjacent by Claim \ref{claim:adjacency}. This construction
also indicates how neighbors of $r$ can be generated from the extremal rays
of $\mathcal C'$.
\end{proof}

The same reduction can be applied iteratively to the reduced cone $\mathcal
C'$. Suppose, in general, that a $k$-dimensional cone $\mathcal C$ embedded
in the $d$-dimensional space is defined as
$$
    \mathcal C = \{ x\in\R^d: Mx \ge 0, \mbox{ and } Ax=0 \,\}
$$
where $M$ has $m$ rows, $d$ columns, and rank $k$; $A$ has $d-k$ rows, $d$
columns, and rank $d-k$; and the composite matrix $M\choose A$ has rank $d$.
Let $r$ be an extremal ray of $\mathcal C$. Enumerating the neighbors of $r$
can be reduced to enumerating the extremal rays of the $k-1$-dimensional
cone $\mathcal C'$ defined by the matrix pair $\mofr$ and $A\choose a$ where
the row $a\in M$ is not in $\mofr$.

The reduction stops when the complexity of the cone $C$ defined by the
matrix pair $(M,A)$ becomes manageable thus its extremal rays can be
enumerated directly. For this purpose one can use the $d$-dimensional DD
method directly with some minor modifications as discussed above. Another
possibility is to make $\mathcal C$ full dimensional first by projecting it
to the $k$-dimensional space
$$
    \pi\mathcal C = \{ y \in\R^k: (MP) y \ge 0 \,\}
$$
where $P$ is a generator of the nullspace of $A$ (see
\cite{strang23} for details). Extremal rays of $\mathcal
C$ can be recovered by applying $P$ to the extremal rays of $\pi\mathcal C$.
Actually, such a projection was used for the $p$-standardized
submodular functions in Section \ref{subsec:std}.

\subsection{The kernel method}

When applying the DD method directly to enumerate all extremal rays of the
cone
$$
    \mathcal C = \{ x\in\R^d: Mx \ge 0, \mbox{ and } Ax=0 \,\},
$$
computations simplify considerably when each row of $M$ contains exactly one
non-zero entry. Using homogeneity and an optional coordinate sign change,
that entry can be assumed to be $1$. When the DD method adds the new row
$a\in M$, old rays are put into positive, negative, and zero parts depending
on the sign of the inner product $a\cdot r$, see line \ref{c1-l4} of Code
\ref{code:1}. Similarly, inner products $a\cdot r_1$ and $a\cdot r_2$ are
used in line \ref{c1-l8} when computing that conic combination of rays $r_1$
and $r_2$ which lies on the hyperplane determined by the row $a$. If the
only non-zero coordinate in $a$ is $a[i]=1$, then computing these inner
products reduces to the trivial task of taking the $i$-th coordinate.
Additionally, ray coordinates where the corresponding row in $M$ contains
zeros only need not to be stored at all; and among the processed coordinates
(determined by those rows of $M$ which were previously handled by DD) it
suffices to store only one bit indicating whether that coordinate of the ray
is zero or not. When DD finishes, these bits determine the matrix $\mofr$,
which allows (re)computing the coordinates of the ray $r$, postponing large
part of high-precision computations to the final stage.

This variant of the DD method is called \emph{kernel} or \emph{null-space
method} \cite{terzer09}. While the kernel method uses very specially defined
cones, it is universal, as the standard ray enumeration problem for the cone
$$
    \mathcal C = \{ x\in \R^d: Mx \ge 0 \,\}
$$
can be transformed into the required special form in the following way.
Suppose $M$ has $m$ rows. Let $I_m$
be the $m\times m$ unit matrix and $y\in\R^m$ be new \emph{slack} variables.
The first $d$ coordinates of the extremal rays of the cone
$$
    \{ (x,y) \in \R^{d+m}: y\ge 0, Mx + I_my = 0 \,\}
$$
are just the extremal rays of $\mathcal C$. The software package POLCO uses
the kernel method for ray enumeration. For a detailed description and
theoretical background of the package, see \cite{terzer09}.

\section{Improving the combinatorial test}\label{sec:bitmap}

In the DD method we need to check the adjacency of each pair of extremal
rays taken from the two sides of the new facet, that is, where one ray comes
from the positive class, and the other ray from the negative; see line
\ref{c1-l7} in Code \ref{code:1}. The majority of these pairs are expected
not to be adjacent, in which case the pair is skipped, thus this test needs
to be as fast as possible. This Section discusses the details and potential
improvements of the combinatorial test described in Claim
\ref{claim:adjacency}.

Let $M$ be the set of facets (inequalities), and $R$ be the set of extremal
rays of the cone $\mathcal C$. The ray--facet \emph{incidence matrix} has
one row for each facet and one column for each ray, and for $a\in M$ and
$r\in R$ the entry at position $(a,r)$ is 1 if the scalar product $a\cdot r$
is zero (the ray $r$ lies on the facet $a$, they are incident), and 0
otherwise. For each $a\in M$ let $\hat a$ be the 0--1 string formed from the
entries in row $a$ of this incidence matrix, and, similarly, $\hat r$ be the
0--1 string formed from the entries in column $r$. Clearly, $\mofr$ contains
those rows of $M$ where $\hat r$ is $1$. Denoting the number of 1's in $\hat
r$ by $|\hat r|$, we have $|\hat r|\ge d-1$ for every $r\in R$. This is
because by Claim \ref{claim:rank1} $\mofr$ has rank $d-1$, thus it has at
least $d-1$ rows. Similarly, the number of rows in both $\mof{r_1}$ and
$\mof{r_2}$, which was denoted by $w(r_1,r_2)$ above, is $|\hat r_1\cap\hat
r_2|$, where the intersection of two strings is understood to be taking the
minimal value at each position.

\begin{pseudocode}{Combinatorial adjacency test of rays $r_1$ and $r_2$\label{code:2}}
 \State \textbf{if} $|\hat r_1\cap\hat r_2|<d-2$ \textbf{then}
     return \textit{no}\label{c2-l1}
 \State $b\gets{}$all 1 bitstring of length $|R|$;\\
     \hbox to 12pt{\hfil} set $b[r_1]\gets 0$ and $b[r_2]\gets 0$\label{c2-l2}
 \ForEach{ $a\in \hat r_1 \cap \hat r_2$}\label{c2-l3}
    \State $b\gets b\cap \hat a$
    \State \textbf{if} $b=0$ \textbf{then} return \textit{yes}
 \EndFor\label{c2-l6}
 \State return \textit{no}
\end{pseudocode}

Code \ref{code:2} outlines the combinatorial test. Line \ref{c2-l1} executes
the quick precheck $w(r_1,r_2)\ge d-2$ coming from the algebraic test; it
filters out many of the cases. Line \ref{c2-l2} initializes the bitstring
$b$, and the loop in lines \ref{c2-l3}--\ref{c2-l6} computes those bits in
$b$ whose index $r$ satisfies $\mofr\supseteq \mof{r_1}\cap \mof{r_2}$ with
the exception of $r_1$ and $r_2$ whose indices are cleared in the
initialization. Finally, $r_1$ and $r_2$ are adjacent if no other ray
remains in $b$, that is, $b$ becomes the all zero string. This condition is
checked in the loop, skipping the remaining computation whenever possible.

\smallskip

N.~Zolotykh made the observation \cite{zolotykh19} that if the ray $r$
violates the combinatorial test, that is,
$$
    \mofr \supseteq \mof{r_1}\cap \mof{r_2},
$$
then both $\mofr\cap \mof{r_1}$ and $\mofr\cap \mof{r_2}$ contain the
intersection $\mof{r_1}\cap \mof{r_2}$ as a subset. In particular, if $|\hat
r_1\cap \hat r_2|\ge d-2$ (this pair survived the quick test), then both
$|\hat r\cap \hat r_1|\ge d-2$ and $|\hat r\cap\hat r_2|\ge d-2$. Thus it
suffices to restrict the search in the loop at \ref{c2-l3}--\ref{c2-l6} of
Code \ref{code:2} to such rays. So for a ray $r_i$ let $\hat g(r_i)$ (for
\emph{graph test}, a terminology used in \cite{zolotykh19}) be the bitstring
indexed by the rays such that at index $r$ this string has 1 if $r$ and
$r_i$ differ and $|\hat r\cap \hat r_i|\ge d-2$, and has 0 otherwise.
Instead of line \ref{c2-l2} in Code \ref{code:2}, $b$ can be initialized to
$b\gets \hat g(r_1)\cap\hat g(r_2)$. Since $b$ starts with fewer bits set,
the loop at \ref{c2-l3}--\ref{c2-l6} is expected to finish earlier.

Computing the strings $\hat g(r_1)$ and $\hat g(r_2)$ for all positive and
negative rays takes time, and it is not clear that this overhead is
compensated for by the improved performance. The main drawback, however, is
the significant memory needed to store these bitstrings. The best compromise
to retain some of the advantages of the graph test without extensive memory
requirement seems to be computing $\hat g(r_1)$ ``on the fly'' for positive
rays, also suggested in \cite{zolotykh19}. Our contribution is to complement
this by not using $\hat g(r_2)$ for negative rays at all. Performing the
adjacency tests in an appropriate order every $\hat g(r_1)$ needs to be
computed only once and the string may occupy the same memory location. This
version, dubbed as \emph{\halfg-graph test}, is sketched as Code
\ref{code:3}.
\begin{pseudocode}{\halfg-graph adjacency test of rays $r_1$ and $r_2$\label{code:3}}
 \If{$\hat g(r_1)$ is not defined}
   \ForEach{$r\in R$}\label{c3-l2}
     \State $\hat g(r_1)[r] \gets (r\ne r_1$ and $|\hat r\cap \hat r_1|\ge d-2)$
   \EndFor\label{c3-l4}
 \EndIf
 \State \textbf{if} $\hat g(r_1)[r_2]=0$ \textbf{then}
        return \textit{no}\label{c3-l6}
 \State $b\gets \hat g(r_1)$; set $b[r_2] \gets 0$\label{c3-l7}
 \ForEach {$a\in \hat r_1 \cap \hat r_2$}\label{c3-l8}
    \State $b\gets b\cap \hat a$
    \State \textbf{if} $b=0$ \textbf{then} return \textit{yes}
 \EndFor\label{c3-l11}
 \State return \textit{no}
\end{pseudocode}
The loop in lines \ref{c2-l2}--\ref{c3-l4} prepares the bitstring $\hat
g(r_1)$; this will be done only once for every positive ray $r_1$ under an
appropriate scheduling. The quick check in line \ref{c3-l6} reuses part of
this precomputation: the result of the condition $|\hat r_1\cap \hat
r_2|<d-2$ is simply looked up in the bitstring $\hat g(r_1)$. Line
\ref{c3-l7} initializes $b$ and the loop at lines \ref{c3-l8}--\ref{c3-l11}
searches for a ray $r$ such that $\mofr$ extends $\mof{r_1}\cap \mof{r_2}$.
The search is done in (typically 64 bit) chunks rather than over the whole
string at once. When a chunk becomes empty, (that is, all zero), which we
expect to happen frequently, the inner loop is aborted, further improving
the performance.

\section{Enumerating extremal submodular functions for $n=6$}\label{sec:apply}

We are now ready to apply the tools and algorithms described above to
specific values of $n$. To recap, Section \ref{subsec:std} defined the cone
of $p$-standardized submodular functions over a base set with $n\ge 3$
elements. Extremal submodular functions, up to a shift by a modular
function, were identified with the extremal rays of a polyhedral cone
$\mathcal C_n$. Enumerating extremal submodular functions means enumerating
these extremal rays. The cone $\mathcal C_n$ sits in the
$d=2^n-(n{+}1)$-dimensional space and is defined by $m={n\choose 2}2^{n-2}$
homogeneous inequalities as
$$
    \mathcal C_n = \{ g\in \R^d: M_n\, g\ge 0\,\},
$$
see Section \ref{subsec:std}. Rows of $M_n$ are determined by the
$p$-standardized elementary inequalities as discussed in Section
\ref{subsec:inequalities}. Each non-zero entry in $M_n$ is either $+1$ or
$-1$, and each row contains two, three or four non-zero entries only, making
$M_n$ to be extremely sparse. As shown in Section \ref{subsec:symmetry},
$\mathcal C_n$ has $2n!$ symmetries. The $n!$ part comes from permuting the
base set, and the factor $2$ comes from reflection. Symmetric images of
extremal rays are also extremal, thus it suffices to have one representative
from each orbit, that is, symmetry class.

Based on the above numbers and on later calculations, Table \ref{table:1}
lists the dimension, number of inequalities, symmetries, total number of
extremal rays, number of orbits, and the weight range (see the comment after
Claim \ref{claim:rank1}) of cones for different values of $n$.

\begin{table}[!htb]%
\def\fs{\footnotesize}%
\caption{Dimension, facet number, symmetries, extremal rays, orbits,
and weight range}\label{table:1}%
\centering
\begin{tabular}{|c|rrrrrc|}
  \hline
  \rule[-4pt]{0pt}{13pt}\fs $n$ & 
  \multicolumn{1}{|c}{\fs $d$} & \fs $m$ &\fs symm & 
\multicolumn{1}{c}{\fs rays} & \multicolumn{1}{c}{\fs orbits } & \fs weight \\
  \hline   
   \rule{0pt}{10pt}3 &  4  &  6 & 12  &  5~~ & 2~~ & 3--4 \\[2pt]
   4 & 11  & 24 & 48  & 37~~ & 7~~ & 10--20 \\[2pt]
   5 & 26  & 80 & 240 & $117\;978$~~ & 672~~{}  & 25--72\\[2pt]
   6 & 57  & 240 & 1440 & ${>}3.9{\cdot}10^{11}$ & ${>}2.6{\cdot}10^8$ & 56--225\\[3pt]
   \hline
\end{tabular}
\end{table}

Section \ref{subsec:extending} defined a canonical embedding of $\mathcal
C_n$ into $\mathcal C_{n+1}$; this embedding preserves both extremal rays
(by Claim \ref{claim:exex}), and orbits (by Claim \ref{claim:exsym}). In
particular, extremal rays of $\mathcal C_n$ can be recovered from the
extremal rays of $\mathcal C_{n+1}$ by simply looking for rays which take
zero at a coordinate labeled by a fixed singleton. Referring to Table
\ref{table:1}, preserving the orbits means that among the 672 orbits of
$\mathcal C_5$ there are seven which are the images of the seven orbits of
$\mathcal C_4$. Similarly, among all orbits of $\mathcal C_6$ there are
exactly 672 in which some (or equivalently, every) ray takes zero at some
singleton, and exactly seven orbits in which the rays take zero at two or
more singletons.

\subsection{Weight (incidence) distribution}\label{subsec:weight}

\begin{figure}%
\centering\begin{tikzpicture}
\input{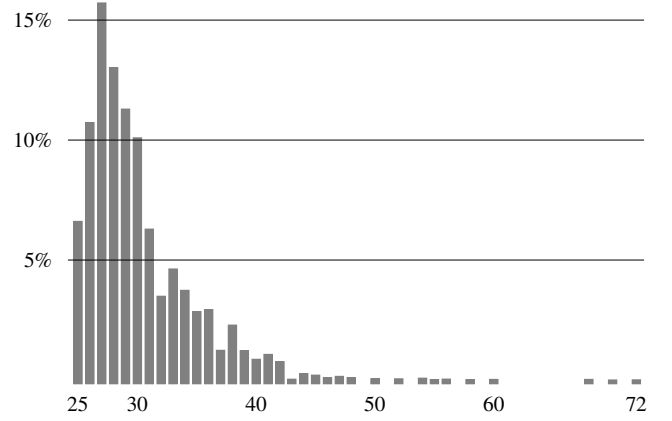}
\end{tikzpicture}%
\caption{Weight distribution for $n=5$}\label{fig:1}
\end{figure}

Weight distribution provides information about how degenerate the cone is. A
$d$-dimensional polyhedral cone is \emph{simple}, or \emph{non-degenerate},
if every extremal ray has the minimal possible weight $d-1$. Figure
\ref{fig:1} depicts the weight distribution of the extremal rays of
$\mathcal C_5$. The distribution has a long tail of rays with large weights.
The estimated weight distribution for $\mathcal C_6$ on Figure \ref{fig:2}
reveals a similar picture.

\begin{figure}%
\begin{tikzpicture}
\input{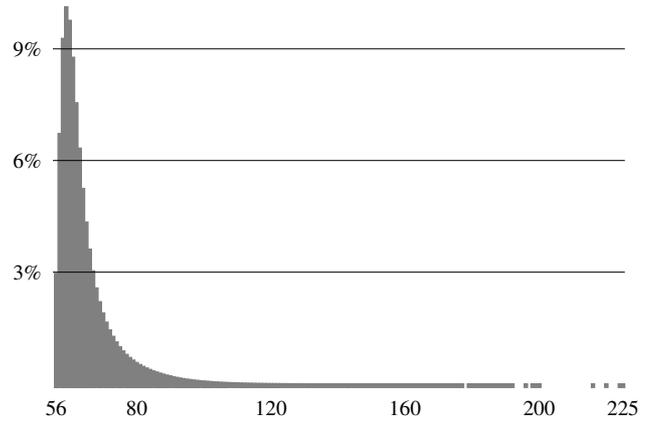}
\end{tikzpicture}%
\caption{Estimated weight distribution for $n=6$}\label{fig:2}%
\end{figure}

To trace the rays with large weights, consider the submodular function $f_J$
defined in (\ref{eq:fj}) for $J\subseteq X$, $|J|\ge 2$. This function is
the rank function of a connected matroid \cite{oxley}, and the rank function
of every connected matroid is extremal \cite{nguyen-78}. Consequently, $f_J$
is an extremal ray of $\mathcal C_n$. These rays (and their duals) have
large weights. Denoting $|J|$ by $k\ge 2$, $f_J$ satisfies all but
${k\choose 2}2^{n{-}k}$ of the elementary inequalities: the exceptional ones
are the triplets $(i,j|K)$ where $i,j\in J$ and $K$ is disjoint from $J$.
This means that $f_J$ has weight
$$
    {n\choose 2} 2^{n-2}-{k\choose 2}2^{n{-}k}.
$$
For $n=3,4,5$ the extremal rays with largest weight are the rays $f_J$ with
$|J|=2$ (and their symmetric versions); the corresponding weights are $4$,
$20$ and $72$, respectively, barely below the maximal possible weight values
of $5$, $23$, and $79$, which is one less than the number of facets. For
$n\ge6$, however, the largest weight is attained when $|J|=n$, that is, by
$f_X$. The weight of $f_X$ is ${n\choose 2}(2^{n-2}-1)$, which exceeds the
cone dimension by a factor of $n^2/8$.

The weight distributions depicted on Figures \ref{fig:1} and \ref{fig:2}
clearly show that the degeneracy of the submodular cones is caused not by a
few rays with large weights, but by the fact that almost all possible
weights occur across the complete weight range.

\subsection{Insertion order}\label{subsec:insertion-order}

As discussed in Section \ref{sec:eer}, conventional wisdom dictates to use
the Double Description method when the cone has a high degree of degeneracy,
see also \cite{avis2016} and \cite[Section 3]{polymake17}. The DD method,
however, is highly sensitive to the insertion order of the facets. Without
careful ordering, the number of extremal rays in an intermediate cone can be
exponentially larger than the final number of rays
\cite{avis2016,fukuda20,fukuda-prodon}, rendering the DD method unusable.
Insertion strategies are categorized by the way they are applied:
\emph{static orderings} are determined and applied in advance and the
ordering is fixed during the computation, while \emph{dynamic orderings}
allow for determining the order of the remaining inequalities dynamically,
depending on the state of the computation.

The static ordering strategy called \emph{lex-min} sorts the rows of the
matrix lexicographically, and applies them in increasing order. An example
for a dynamic strategy is \emph{max-cut} (resp.~\emph{min-cut}), which
chooses the unprocessed matrix row which cuts off the maximal
(resp.~minimal) number of extremal rays from the actual intermediate cone.
Experimental assessment of different strategies indicated, see
\cite{fukuda-prodon,howgood}, that any one can work reasonably well for some
enumeration problem, but could fail badly for others. The overall
recommendation is to use lex-min or some of its refinements, as this
strategy constantly outperforms all others, frequently by orders of
magnitude. See \cite[Chapter 8.1]{fukuda20} or \cite[Chapter 3.2]{terzer09}.

In the specific case of submodular cones, however, better insertion
strategies may exist, especially as the lex-min ordering is sensitive to the
order of the coordinates, while the DD method itself is not. Moreover,
previous experience showed that existing ordering strategies lead to an
``overshoot'' of the intermediate cones in terms of the number of extremal
rays, before this number reduced to the final result.
\begin{figure}[!htb]
\begin{tikzpicture}
\input{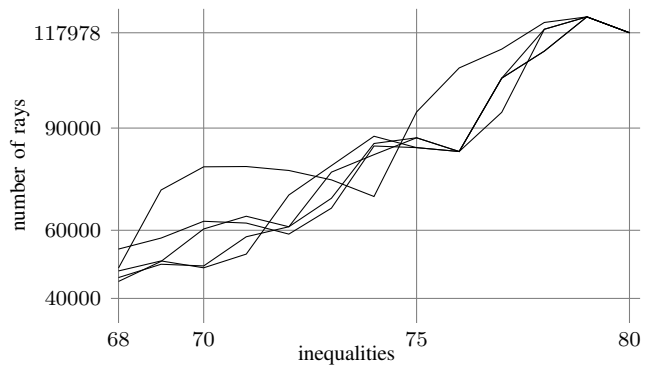}
\end{tikzpicture}%
\caption{Size of intermediate cones for lex-min inserting order using randomly shuffled coordinates}\label{fig:4b}
\end{figure}
This is illustrated by the size of the last few intermediate cones, depicted
on Figure \ref{fig:4b}, where the DD method was applied to $\mathcal C_5$
using the lex-min inserting strategy with several random permutations of the
coordinates. Our aim was to find an insertion strategy that avoids this
phenomenon.

The method to achieve this goal was a backward greedy algorithm, which
calculated the insertion order in \emph{reverse}. We first determined which
facet (which one out of the $m$ rows of the matrix $M$) should be added
\emph{last} so that the penultimate cone would have the smallest possible
number of extremal rays. Fortunately, this task requires solving at most $m$
ray enumeration problems, as the penultimate cone is independent of the
insertion order of its $m-1$ facets. Next, further elements of the insertion
order were determined similarly, always making sure that the initial
sequence of inequalities had full rank.

We determined the insertion order experimentally for $\mathcal C_5$, and
aimed to generalize the resulting order to $n=6$. To illustrate the choice
for the last inequality, note from Table \ref{table:1} that $\mathcal C_5$
has dimension $d=26$ and is bounded by $m=80$ facets. Also, the facets of
$\mathcal C_5$, or, equivalently, the rows of the defining matrix $M_5$,
correspond to the elementary inequalities $(i,j|K)$. Denote the elements of
the 5-element base set $X$ by digits from 0 to 4. Taking permutational
equivalence into account there are only four different elementary
inequalities, namely
$$
   (0,1|\emptyset),~~ (0,1|2),~~ (0,1|23), \mbox{ and } (0,1|234).
$$
As discussed in Section \ref{subsec:symmetry}, reflection (or duality) maps
$(0,1|\emptyset)$ to $(0,1|234)$, and maps $(0,1|2)$ to $(0,1|34)$, and the
latter one is permutationally equivalent to $(0,1|23)$. Consequently only
two cases has to be considered for the last position: it is either
$(0,1|\emptyset)$ or $(0,1|2)$. In the first case the remaining inequalities
determine the penultimate cone with 112\;712 extremal rays, in the second
case that cone has 122\;642 extremal rays. Therefore $(0,1|\emptyset)$ (or
one of its equivalents) should be inserted last.

We abstracted the patterns discovered in the experimental results into the
\emph{\tailo} (for ``tail-optimal'') \emph{insertion order} defined below.
Note that the experimental results do not determine the order of the first
$d$ inequalities.

For the definition of the \tailinsorder{}, fix an ordering of the
$n$-element base set $X$. A subset $K$ of $X$ is identified with the string
of length $|K|$ enlisting elements of $K$ in increasing order, and the
elementary inequality $(i,j|K)$ is always written so that $i$ precedes $j$,
similarly to the examples above.

\begin{figure}%
\begin{tikzpicture}
\input{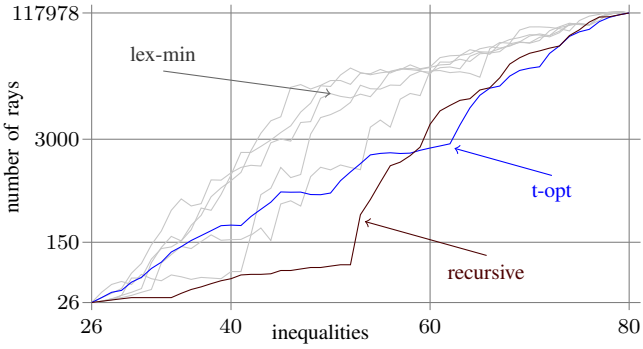}
\end{tikzpicture}
\caption{Size of intermediate cones for \tailo{} (blue), 
recursive (red), and lex-min (gray) insertion order for $n=5$}\label{fig:4}
\end{figure}

\begin{definition}[\Tailinsorder]\label{def:std}
Two elementary inequalities are in the relation $(i_1,j_1|K_1)\rstd(i_2,j_2|K_2)$
iff
\begin{itemz}
\item when the subsets $K_1$ and $K_2$ have different number of elements
then $|K_1|$ precedes $|K_2|$ in the list $0$,
$n-2$, $1$, $n-3$, $2$, $n-4$, \dots
\item when $K_1$ and $K_2$ have the same number of elements
then the string $i_1j_1K_1$ is lexicographically smaller than the string $i_2j_2K_2$.
\end{itemz}
The \emph{\tailinsorder} is the reverse of the order $\rstd$.
\end{definition}

For the base set $X=\{0,1,2,3\}$ the order $\rstd$ is illustrated by
$$
    (2,3|\emptyset)\rstd
    (1,2|03) \rstd
    (0,1|3) \rstd (1,3|2);
$$
moreover, $(0,1|\emptyset)$ is the smallest, and $(2,3|1)$ is the largest one
in this ordering.

The blue curve on Figure \ref{fig:4} depicts, on logarithmic scale, the size
of the intermediate cones when the \tailinsorder{} is used to enumerate the
extremal rays of $\mathcal C_5$. The curve increases steadily (as opposed to
the cases where the lex-min order was used), and seems to be approximately
linear, indicating a steady exponential growth. Using this insertion order
for $\mathcal C_5$ the total execution time of our implementation of the DD
method running on a single core of an Intel i5-4590 CPU was under 1 minute,
beating all other tested insertion strategies. For comparison, the same task
on the same single core using POLCO v.4.7.1 \cite{terzer09} took 1.9
minutes, and using \emph{lrs} (lrslib v.7.3, \cite{avis-lrs}) took 590
minutes.

When the \tailinsorder{} was applied to $\mathcal C_6$, the number of
extremal rays in the intermediate cones grew steadily, as expected. After
completing 44 iterations this number reached 18\;506\;227. At this point the
computation was stopped as the last iteration took over 200 hours to finish,
and the next iteration was estimated to require five to ten times as much
running time, see Section \ref{subsec:neig2}. The size of intermediate cones
up to this point is plotted as the blue curve on Figure \ref{fig:5}.
\begin{figure}%
\centering
\begin{tikzpicture}
\input{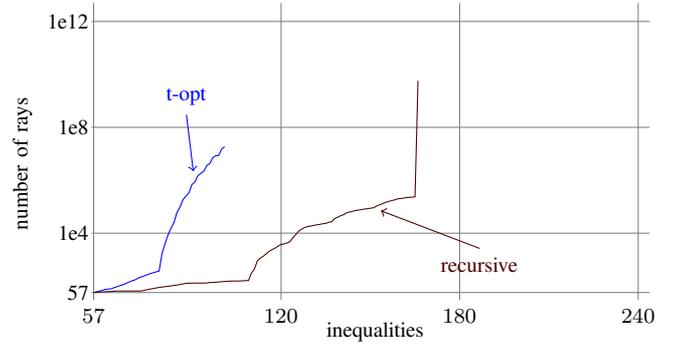}
\end{tikzpicture}
\caption{Size of intermediate cones for \tailo{} (blue) and
recursive (red) insertion order for $n=6$}\label{fig:5}
\end{figure}
Assuming it is similar to the $n=5$ case, this initial segment gives an
estimate for the total number of extremal rays of $\mathcal C_6$ somewhere
between $10^{20}$ and $10^{30}$.

\smallskip

Due to the excessive time the next iteration would have taken, the DD
algorithm for $\mathcal C_6$ and using the \tailinsorder{} had to be stopped
quite early. Therefore we searched for a different inserting regime which
would allow processing more inequalities in reasonable time. It was observed
that when $K_1\subset K_2\subset K_3\subset\cdots$ inserting the
inequalities
$$
   (i,j|K_1), ~~ (i,j|K_2), ~~ (i,j|K_3),~~~\dots
$$
in this order resulted in a moderate increase in the number of the extremal
rays. This observation motivated our definition of the \emph{recursive
order}. For the definition we assume again that the elements of $X$ are
ordered. The recursive procedure $\REC(K)$ enumerates the subsets of
$K\subseteq X$, where $K$ is specified as an increasing list of its
elements. The enumeration is defined as follows:
\begin{itemz}
\item List $K$ itself first;
\item For all $i$ in $K$ in decreasing order, call $\REC(K\setm i)$ 
and keep the first occurrence of each emitted subset.
\end{itemz}
That is, $\REC(0,1,2,3)$ lists $\{0,1,2,3\}$ first, then calls
$\REC(0,1,2)$, $\REC(0,1,3)$, $\REC(0,2,3)$ and $\REC(1,2,3)$. Also,
$\REC(0,1)$ produces $\{0,1\}, \{0\}, \emptyset, \{1\}$ in this order, and
$\REC(0,1,2)$ produces
$$
\{0,1,2\}, \{0,1\}, \{0\}, \emptyset, \{1\}, \{0,2\}, \{2\}, \{1,2\}.
$$

\begin{definition}[Recursive insertion order]\label{def:rec}
Enumerate the elementary inequalities as follows. First, take the
two-element subsets $ij$ of $X$ with $i<j$ in lexicographic order. For each
such pair $ij$, enumerate all subsets $K$ of $X\setm ij$ by calling 
$\REC(X\setm ij)$ and append $(i,j|K)$ to the list.

The \emph{recursive insertion order} is the reverse of this enumeration.
\end{definition}

\noindent
As an example, for $X=\{0,1,2,3\}$ the recursive enumeration of the
inequalities starts with $(0,1|23)$, $(0,1|2)$, $(0,1|\emptyset)$,
$(0,1|3)$, $(0,2|13)$, and ends with $(2,3|0)$, $(2,3|\emptyset)$,
$(2,3|1)$. The insertion order is the reverse: it starts with $(2,3|1)$ and
ends with $(0,1|23)$.

Figures \ref{fig:4} and \ref{fig:5} also show the performance of the
recursive insertion order. Overall, it is slightly worse than the
\tailorder{} (using about 15\% more time to generate all extremal rays of
$\mathcal C_5$), but it allows inserting significantly more inequalities
before the size of the intermediate cone suddenly increases. In case of
$\mathcal C_6$, as shown by Figure \ref{fig:5}, the last intermediate cone
before the size jump has 165 facets out of the 240, and has only 235\;961
extremal rays. In the next iteration the number of extremal rays jumps to
5\;733\;451\;485.

\medskip

Undoubtedly both insertion strategies are variants of lex-min. Nevertheless,
our experiments showed that the recursive order has some intricate
connection with the structure of the submodular cone. Denoting the smallest
element of the $n$-element base set $X$ by $0$, recall from Definition
\ref{def:rec} that the elementary inequalities $(0,j|K)$ form an end segment
of the recursive order. The intermediate cone $\mathcal C^*_n$ defined by
inserting all other inequalities has the moderate number of rays
\begin{equation}\label{eq:cstarrays}
  |\mathcal C^*_n| = 2|\mathcal C_{n-1}|+ (n{-}1).
\end{equation}
For example, $|\mathcal C^*_5|=78$ as $|\mathcal C_4|=37$ from Table
\ref{table:1}, while the total number of rays of $\mathcal C_5$ is 117\;978.
Similarly, $|\mathcal C^*_6|= 235\;961 = 2|\mathcal C_5|+5$, which is
negligible compared to the estimated number of rays of $\mathcal C_6$.
Equation (\ref{eq:cstarrays}) follows from the fact that the bounding
inequalities
$$
 \{\, (i,j|K),~ (i,j|0K) : i,j\in X\setm 0 \mbox{ and  }K\subseteq X\setm 0ij 
 \,\}
$$
of $\mathcal C^*_n$ define two disjoint copies of $\mathcal C_{n-1}$ on two
disjoint subsets of the coordinates (that is, subsets of $X$), namely those
which do not contain the element $0$, and those which do. The additional
$(n{-}1)$ term in (\ref{eq:cstarrays}) comes from the fact that the first
copy is not $p$-standardized, see Section \ref{subsec:std}. The recursive
nature of this insertion strategy implies that similar reductions occur
earlier, explaining the recursive pattern of the initial segments of the red
curves in Figures \ref{fig:4} and \ref{fig:5}.

\medskip
Next, consider the step in the DD method after---or applied to---$\mathcal
C^*_n$. It exhibits an intriguing structural property of the submodular
cone, which explains the sudden jump in the number of extremal ray in the
next iteration.
\begin{claim}\label{claim:cstar}
Suppose the DD method cuts the cone $\mathcal C^*_n$ by the inequality $(0,i|K)$.
Among the extremal rays of $\mathcal C^*_n$ there will be exactly one more positive
than negative rays. Moreover, all positive / negative ray pairs are adjacent.
\end{claim}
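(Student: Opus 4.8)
The strategy is to derive everything from the product structure of $\mathcal C^*_n$ recorded in the paragraph just before the claim. In the coordinates of $\mathcal R$ one has $\mathcal C^*_n=\mathcal A\times\mathcal B$, where $\mathcal A$ sits on the coordinates indexed by subsets of $X$ containing $0$ and is a copy of $\mathcal C_{n-1}$, and $\mathcal B$ sits on those not containing $0$ and is the non-$p$-standardized copy of $\mathcal C_{n-1}$. Consequently every extremal ray of $\mathcal C^*_n$ is either of the form $(a,0)$ with $a$ an extremal ray of $\mathcal A$, or $(0,b)$ with $b$ an extremal ray of $\mathcal B$; there are $|\mathcal C_{n-1}|$ of the first kind and $|\mathcal C_{n-1}|+(n-1)$ of the second.

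First I would split the cutting inequality along the product:
\[\delta(0,i|K)\cdot g=\bigl(g_{0K}-g_{0iK}\bigr)+\bigl(g_{iK}-g_{K}\bigr),\]
where the first bracket is a linear functional $\alpha$ of the $\mathcal A$-coordinates alone (both $0K$ and $0iK$ contain $0$) and the second a functional $\beta$ of the $\mathcal B$-coordinates alone, so that $\delta(0,i|K)\cdot(a,0)=\alpha(a)$ and $\delta(0,i|K)\cdot(0,b)=\beta(b)$. Under the relabelling $A\mapsto A\setminus 0$ that identifies $\mathcal A$ with $\mathcal C_{n-1}$, $\alpha$ becomes $\tilde a_{K}-\tilde a_{iK}$ with $K\subseteq iK$; the points of $\mathcal A$ are $p$-standardized submodular functions, hence tight polymatroids by Claim~\ref{claim:2} and in particular monotone, so $\alpha\le0$ everywhere on $\mathcal A$. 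The companion statement to prove is $\beta\ge0$ everywhere on $\mathcal B$, i.e.\ $g_{iK}\ge g_{K}$ throughout the non-$p$-standardized copy. Granting it, no $\mathcal A$-ray is positive and no $\mathcal B$-ray is negative, so the positive rays are exactly the $\mathcal B$-rays with $\beta>0$ and the negative rays exactly the $\mathcal A$-rays with $\alpha<0$; in particular every positive/negative pair has one member in $\mathcal A$ and one in $\mathcal B$.

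The adjacency assertion is then automatic: such a pair is $\{(a,0),(0,b)\}$, and the two-dimensional cone it spans, $\{(sa,tb):s,t\ge0\}$, is the product of the extremal ray of $\mathcal A$ through $a$ with the extremal ray of $\mathcal B$ through $b$; since a product of faces of the two factors is a face of $\mathcal A\times\mathcal B$, this cone is a face of $\mathcal C^*_n$ and the two rays are adjacent. For the counting assertion, I would match the $|\mathcal C_{n-1}|$ extremal rays of $\mathcal A$ with the $|\mathcal C_{n-1}|$ ``polymatroidal'' extremal rays of $\mathcal B$, namely those carrying the extremal rays of $\mathcal C_{n-1}$: on a matched pair the conditions $\alpha<0$ (on the $\mathcal A$-side) and $\beta>0$ (on the $\mathcal B$-side) coincide, both recording that the underlying rank function strictly increases when $i$ is adjoined to $K$. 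These therefore contribute equally to the two counts, and $\#\mathrm{pos}-\#\mathrm{neg}$ equals the number of the remaining $n-1$ ``de-standardization'' extremal rays of $\mathcal B$ at which $\beta>0$; the claim reduces to showing that exactly one of these $n-1$ rays lies off the hyperplane $g_{iK}=g_{K}$ while the other $n-2$ lie on it.

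This last reduction is the main obstacle: it needs an explicit inventory of the $n-1$ extra extremal rays created by de-standardizing $\mathcal C_{n-1}$ and a determination of their position relative to $g_{iK}=g_{K}$. Proving $\beta\ge0$ on $\mathcal B$ is a secondary difficulty, since the monotonicity of polymatroids is not available on the non-$p$-standardized copy and one must argue directly from its defining submodular inequalities in the coordinates surviving in $\mathcal R$. Finally, the coordinate indexed by the full set $X$ figures in the descriptions of both factors, so the decomposition is really a fibre product over that coordinate rather than a literal direct product; as $\delta(0,i|K)$ involves it only in the degenerate case $K=X\setminus 0i$, and it is non-negative on $\mathcal C^*_n$, that case can be disposed of on its own.
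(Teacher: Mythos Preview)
Your approach is essentially the paper's: split $\mathcal C^*_n$ as a product along the coordinates containing~$0$ versus not, observe that the cutting functional takes opposite values on the two embeddings of the same extremal ray of $\mathcal C_{n-1}$, deduce adjacency of all opposite pairs, and attribute the surplus of one positive ray to the $n-1$ additional rays of the non-$p$-standardized factor.  The paper's proof is no more detailed than yours at the points you flag as obstacles: it simply asserts that among those $n-1$ extra rays exactly one is positive and the others are zero, and it does not discuss the $X$-coordinate entanglement at all.

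Two remarks.  First, your obstacle~(2) ---~proving $\beta\ge 0$ globally on $\mathcal B$~--- is stronger than what either argument needs.  For the matched rays it is immediate from $\beta(r)=-\alpha(r)$ and your monotonicity argument for $\alpha\le 0$; only the $n-1$ extra rays require a direct check, so obstacle~(2) collapses into obstacle~(1).  Second, your sign assignment is the correct one: on the $p$-standardized copy the functional is $\tilde r_K-\tilde r_{iK}\le 0$ by monotonicity, so those rays are non-positive, and the matched rays in the other copy are non-negative.  The displayed inequality in the paper's proof has this sign reversed, but the remainder of the argument (and the count) comes out the same.
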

\begin{proof}
As was done above, split the coordinates of $\mathcal C_n$ (that is, subsets
of $X$) into two classes depending on whether they contain the minimal
element $0$. Put coordinates containing $0$ first, followed by the others.
An extremal ray of $\mathcal C^*_n$ has zeros exclusively at the coordinates
in the first part, or zeros exclusively in the second part. Furthermore, the
non-all-zero part is an extremal ray of $\mathcal C_{n-1}$. Therefore,
extremal rays of $\mathcal C^*_n$ are the concatenations of a zero and a
non-zero vector in either $\langle r,\zerovec\rangle$ or $\langle
\zerovec,r\rangle$ order where $r$ is an extremal ray of $\mathcal C_{n-1}$.
It is easy to check that the algebraic condition in Claim
\ref{claim:adjacency} for adjacency holds for all ``opposite'' ray pairs
$\langle r_1,\zerovec\rangle$ and $\langle \zerovec,r_2\rangle$, thus all
these pairs are actually adjacent extremal rays of $\mathcal C^*_n$, see
also \cite{ziegler}. Choosing the inequality $(0,i|K)$ as the next one to be
inserted by the DD method, the extremal rays of $\mathcal C^*_n$ are split
into positive, zero, and negative ones depending on which side of the
hyperplane corresponding to $\delta(0,i|K)$ they are on. Observe that for
every extremal ray $r$ of $\mathcal C_{n-1}$ we have
$$
    0 \le \delta(0,i|K)\cdot \langle r,\zerovec\rangle = 
          - \delta(0,i|K)\cdot \langle \zerovec,r\rangle .
$$
Consequently, either both $\langle r,\zerovec\rangle$ and $\langle
\zerovec,r\rangle$ are in the zero part, or the first one is in the
positive, and the second one is in the negative part---meaning that the
positive and negative parts have equal size. Moreover, each positive /
negative ray pair is adjacent, and so they produce a new ray in the next
iteration. We also need to account for the additional $n{-}1$ extremal rays
in the first copy of $\mathcal C_{n-1}$ in $\mathcal C^*_n$, of which
exactly one goes into the positive part, and the other $(n{-}2)$ go into the
zero part, proving the first statement of the claim. The same reasoning as
above shows that this single additional ray in the positive part is adjacent
to all the negative ones, proving the second claim.
\end{proof}

In summary, independently which of the $(0,i|K)$ inequalities is added next
to $\mathcal C^*_n$, the number of positive rays will be one more than the
number of negative rays, and each positive / negative ray pair will define
an extremal ray in the next iteration. Specifically for $n=6$, the next
inequality with which the recursive order intersects $\mathcal C^*_6$ is
$(0,5|34)$. Among the 235\;961 extremal rays of $\mathcal C^*_6$ the number
of positive, zero, negative ones relative to this inequality are 75\;719,
84\;524, and 75\;718, respectively. Thus the number of extremal rays in the
next iteration is
$$
  75\;719+84\;524 + 75\;719\times 75\;718 = \mbox{5\;733\;451\;485},
$$
as was claimed earlier. A ray of this cone has 57 coordinates. Using only
two bytes to store a coordinate value, the list of extremal rays of this
iteration would occupy more than 650 gigabytes. 

\subsection{Generating the first extremal rays of $\mathcal C_6$}\label{subsec:first}

Extremal rays of $\mathcal C_5$, as discussed in Section
\ref{subsec:extending}, automatically provide extremal rays of $\mathcal
C_6$ via the extension $r(A)=r(Ay)=r'(A)$ where $r'$ is extremal in
$\mathcal C_5$.

Our first attempt to generate further extremal rays was to use Claim
\ref{claim:rank1}. Rows of the generating matrix $M_6$ were chosen randomly
until the chosen rows formed a submatrix of rank $d-1$. The one-dimensional
solution $r$ of this homogeneous system was then checked against $M_6r\ge
0$, that is, whether $r\in\mathcal C_6$. If yes, then $r$ provided an
extremal ray of $\mathcal C_6$. While every extremal ray had a chance to be
found, mostly rays with extremely large weights (see Figure \ref{fig:2})
were generated, due to the high number of submatrices generating the same
ray. Running even for a considerable time, this method generated only a few
thousand \emph{essentially different} rays, that is, rays from different
orbits, see Section \ref{subsec:symmetry}. The efficiency improved only
marginally when we tried to restrict the search to the neighborhood of a ray
$r$ that was found earlier. It was done by fixing a $d{-}1$-row submatrix of
$M_6$ which determines this $r$. When choosing rows of $M_6$ which were to
generate the next ray, the first $d-2$ rows were chosen from this submatrix.
By Claim \ref{claim:adjacency} extremal rays generated this way are those
which are adjacent to $r$.

\smallskip

Another possibility to generate extremal rays of $\mathcal C_6$ is indicated
by the following observation. If $r$ is an extremal ray of any intermediate
subcone of the DD method so that $r$ is also an element of $\mathcal C_6$,
then $r$ is an extremal ray of $\mathcal C_6$. It is because every extremal
ray of an intermediate cone is either an extremal ray of the next iteration,
or it is outside of the next cone, and thus of $\mathcal C_6$.

We ran the DD algorithm for several steps, and checked which extremal rays
of the last iteration were actually elements of $\mathcal C_6$. Since by
Claim \ref{claim:2} extremal rays of $\mathcal C_6$ have non-negative
coordinates, it was worth checking if $r\ge 0$ before delving into the more
time consuming computation of $r\in\mathcal C_6$. Furthermore, the check
$r\in\mathcal C_6$ can be incorporated into the inner loop of the lastly
executed DD step. Code \ref{code:4} details the replacement of the loop at
lines \ref{c1-l6}--\ref{c1-l11} of Code \ref{code:1}. It postpones the
expensive adjacency test for those pairs which would otherwise produce an
extremal ray of $\mathcal C_6$.
\begin{pseudocode}{Modified inner loop in the last round of DD\label{code:4}}
\LComment{previous cone is to be split by the row $a\in M_6$}
\ForEach{$r_1$ positive / $r_2$ negative ray}
  \State \textbf{if} $|\hat r_1\cap\hat r_2|<d-2$ \textbf{then} continue\label{c4-l3}
  \State compute the ray $r={}$conic$(r_1,r_2)\cap a$
  \State \textbf{if} not $r\ge 0$ \textbf{then} continue
  \State \textbf{if} not $M_6r\ge 0$ \textbf{then} continue
  \If{$r_1$ and $r_2$ are adjacent}\label{c4-l7}
    \State report $r$ as extremal in $\mathcal C_6$
  \EndIf
\EndFor
\end{pseudocode}
Line \ref{c4-l3} is the quick precheck of the combinatorial test detailed in
Code \ref{code:2}. If $r_1$ and $r_2$ pass this test, then the potential new
ray is computed and checked for being an element of $\mathcal C_6$. The
adjacency test is performed only if this is the case, resulting in a
significant speed-up. This modified DD iteration was used on top of the
intermediate cones produced by the \tailinsorder{} using different choices
for the next inequality.

Starting from $\mathcal C^*_6$ as the intermediate cone, by Claim
\ref{claim:cstar} every positive / negative ray pair of this cone is
adjacent. Therefore the checks in lines \ref{c4-l3} and \ref{c4-l7} of Code
\ref{code:4} should not be executed at all when using the modified iteration
on top of $\mathcal C^*_6$.

Overall, these processes provided about a half million essentially different
extremal rays of $\mathcal C_6$, that is, representatives of that many
different orbits.

\subsection{Visiting the neighborhood}\label{subsec:neig2}

\emph{Adjacency decomposition} is a natural approach when the underlying
problem has many symmetries, see \cite{bremner09,Plos17,rehn10}. As
described in Section \ref{subsec:neighbor}, adjacency decomposition starts
with some initial extremal rays of $\mathcal C$, generates their neighbors,
then the neighbors of these neighbors, etc., until no new ray can be
generated. Adjacency decomposition requires solving several enumeration
problems in one less dimension, typically with much smaller number of
constraints. It can enumerate all extremal rays even in cases when the
original DD method would exhaust all available resources \cite{sikiric07}.
The efficiency is partially due to the fact that adjacency decomposition can
take advantage of cone symmetries, while the DD method cannot. It is so as
it sufficies to find the neighbors of a single representative from each
orbit (that is, symmetry class), as neighbors of rays from the same orbit
are the symmetrical images of neighbors of this representative. Adjacency
decomposition can also be applied recursively to the most difficult
subproblems. Typically those difficult subproblems have a large number of
symmetries as well, improving the efficiency further.

A clear resource limit of the DD method is the total number of the extremal
rays to be enumerated. The same limit for the adjacency decomposition method
is the number of orbits. According to Figure \ref{fig:6}, in case of the
submodular cone $\mathcal C_6$, each orbit contains, with minimal
exceptions, $2n!=1440$ rays. Consequently adjacency decomposition could
reduce the complexity of enumerating extremal rays by three orders of
magnitude.

\begin{figure}%
\begin{tikzpicture}
\draw (-0.2,3.36441165607601) node[left] {\footnotesize 1e-3};
\draw (-0.15,3.36441165607601) -- (6.5,3.36441165607601);
\draw (-0.2,2.11785772970005) node[left] {\footnotesize 1e-5};
\draw (-0.15,2.11785772970005) -- (6.5,2.11785772970005);
\draw (-0.2,0.871303803324086) node[left] {\footnotesize 1e-7};
\draw (-0.15,0.871303803324086) -- (6.5,0.871303803324086);
\node[rotate=90,above=7pt] at (-0.6,6.5*0.45) {\hbox to 0pt{\hss\footnotesize relative frequency}};
\draw (0.21125,-0.4) node {\footnotesize 720};
\draw (0.86125,-0.4) node {\footnotesize 360};
\draw (1.51125,-0.4) node {\footnotesize 240};
\draw (2.16125,-0.4) node {\footnotesize 180};
\draw (2.81125,-0.4) node {\footnotesize 120};
\draw (3.46125,-0.4) node {\footnotesize 90};
\draw (4.11125,-0.4) node {\footnotesize 60};
\draw (4.76125,-0.4) node {\footnotesize 40};
\draw (5.41125,-0.4) node {\footnotesize 30};
\draw (6.06125,-0.4) node {\footnotesize $\le$20};
\draw (6.5*0.45,-0.8) node {\footnotesize orbit size};
\filldraw[gray] (0,0) rectangle (0.4225,3.6);
\filldraw[gray] (0.65,0) rectangle (1.0725,2.72404012605833);
\filldraw[gray] (1.3,0) rectangle (1.7225,1.95896871995725);
\filldraw[gray] (1.95,0) rectangle (2.3725,1.82199065458931);
\filldraw[gray] (2.6,0) rectangle (3.0225,1.75571127059862);
\filldraw[gray] (3.25,0) rectangle (3.6725,1.19283608602081);
\filldraw[gray] (3.9,0) rectangle (4.3225,1.05892886485002);
\filldraw[gray] (4.55,0) rectangle (4.9725,0.848733590321585);
\filldraw[gray] (5.2,0) rectangle (5.6225,0.946454757005704);
\filldraw[gray] (5.85,0) rectangle (6.2725,0.871303803324086);
\end{tikzpicture}%
\caption{Relative frequency of orbits of $\mathcal C_6$ with size below
 1440.}\label{fig:6}%
\end{figure}
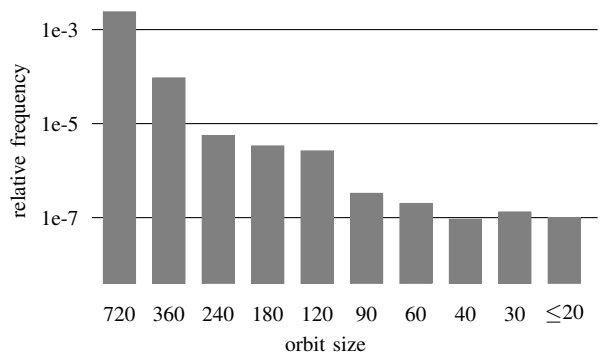

Enumerating neighbors should be started with low-weight rays as in these
``easy'' cases the DD method can be used directly as outlined in Section
\ref{subsec:neighbor}. This approach was successfully used to generate 260M
essentially different extremal rays of $\mathcal C_6$ (that is, rays on
different orbits), starting from the rays computed earlier. This required
computing neighbors of only about 450\;000 rays. These rays had typically
low weights: 69\% had weights between 56 and 59, 21\% had weights 60--69,
and the remaining 10\% had weights between 70 and 80. On average, each
probed ray produced over 600 neighbors. More than 95\% of these computations
took less than 1 second. Figure \ref{fig:3} depicts a sample of longer
computations showing the total number of generated neighboring rays versus
the speed of generation on the same CPU and using a single core.
\begin{figure}[!b]%
\begin{tikzpicture}
\input{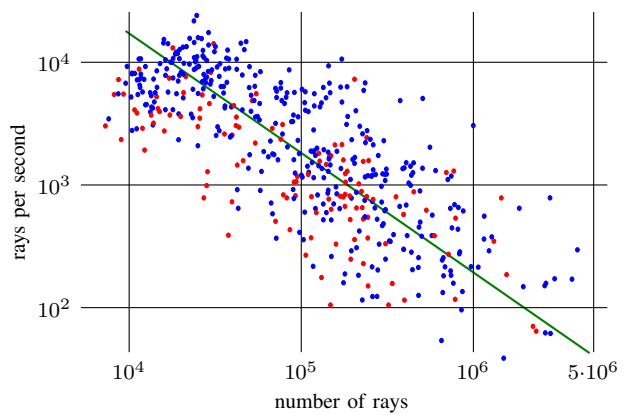}
\end{tikzpicture}%
\caption{Speed of generating adjacent rays. Blue: \tailorder, red:
recursive order.}\label{fig:3}%
\end{figure}
The computation used either the \tailorder{} (blue dots) or the recursive
order (red dots), see Section \ref{subsec:insertion-order}, restricted to
the rows of the corresponding submatrix. While the recursive order seems to
appear mainly in the slower region, in some cases it was significantly
faster than the \tailorder. The plotted data seem to follow, alas with large
deviation, a linear trend marked by the green line. Since both coordinates
are on logarithmic scale, it gives the exponential approximation
$$
    \mbox{speed} = C\cdot \mbox{size}^{-0.6}
$$
for some constant $C$. Thus if the output size increases tenfold, then
the speed goes down by a factor of $10^{-0.6}\approx 0.25$, and
the total generating time is expected to go up by a factor of $40$.

\smallskip

The weight distribution for rays and for orbits are almost identical
as only a small fraction of the orbits is not maximal, see Figure \ref{fig:6}.
Using the data of Figure \ref{fig:2}, 90\% of the weights are expected
to be below 75, and so to belong to the ``easy'' cases.
\begin{table}[!htb]%
\def\p(#1){~(#1)}%
\def\xx#1{}%
\caption{Number of neighboring orbits of the heaviest extremal rays
for $n=5$ (left), and $n=6$ (right)}\label{table:2}%
\hfill
\begin{tabular}[t]{|c|c|c|}
\hline
\rule[-5pt]{0pt}{15pt}$|J|$ & $w(f_J)$ & orbits \\
\hline
\rule{0pt}{10pt}%
2     &   72     \xx{45149 \p(38)} & \!100\% \p(672) \\
3     &   68     \xx{25778 \p(22)} & ~99\% \p(664) \\
4     &   68     \xx{27792 \p(24)} & ~95\% \p(636) \\
5     &   70     \xx{24674 \p(21)} & ~44\% \p(299) \\
\hline
\end{tabular}%
\qquad
\begin{tabular}[t]{|c|c|c|}
\hline
\rule[-5pt]{0pt}{15pt}$|J|$ & $w(f_J)$ & orbits \\
\hline\rule{0pt}{10pt}%
2 & 224 & 89\% \\
3 & 216 & 83\% \\
4 & 216 & 89\% \\
5 & 220 & 76\% \\
6 & 225 & 37\% \\
\hline
\end{tabular}\hfill\hbox{}
\end{table}
There are, however, ``difficult'' cases as well. As discussed in Section
\ref{sec:eer}, almost all weights occur between the smallest and the largest
possible values with example of rays with large weights being the extremal
rays $f_J$ defined in (\ref{eq:fj}). Table \ref{table:2} lists how many
neighboring orbits the rays $f_J$ have. For $n=5$ the exact numbers are
shown in parentheses, for $n=6$ the percentages are estimates. The data
implies that even listing the neighboring orbits of these extremal rays
requires space comparable to the total number of orbits. For $n=6$ finding
all neighbors of $f_{01}$, or at least checking whether all of them have
been found, would require efforts comparable to enumerating all orbits of
the extremal rays of $\mathcal C_6$.

\section{Estimating the total number of rays and orbits for $n=6$}\label{sec:estimate}

While we successfully generated a large number of extremal rays of $\mathcal
C_6$, generating a complete list with any of the above methods would require
an unrealistic amount of resources. We still aimed to provide an estimate
for their overall number to understand the expected complexity of the
complete problem. However, to the best of our knowledge, no efficient
randomized algorithm exists for computing a reasonable approximation of the
total number of extremal rays of a polyhedral cone, and only two general
approaches have been proposed. The first method, developed by Avis \&
Devroye \cite{avis-count}, is based on the backtrack tree size estimator of
Knuth, and was implemented around the reverse search (RS) vertex enumerating
algorithm \cite{avis-fukuda}. This estimator, while theoretically unbiased,
has enormous variance and in many cases vastly underestimates the number of
rays. The implementation in the software package \emph{lrs} \cite{avis-lrs}
gave the estimates ${\approx}12\;000$ for $\mathcal C_5$ and
${\approx}102\;000$ for $\mathcal C_6$ consistently. (The parameter
\emph{maxdepth} was set to 2 and the iteration count was set to 100.) The
fact that $\mathcal C_5$ has almost ten times as many rays renders these
estimates too inaccurate to be useful.

The second estimation method from \cite{salomone-count} is based on MCMC
(Markov Chain Monte Carlo) and uses conditional sampling to reduce the
variance. The algorithm, however, assumes that choosing $(d{-}1)$ rows of
the generating matrix $M$ randomly samples its maximal $(d{-}1)$-rank
subsets uniformly. This is not the case for the highly degenerate submodular
cones. In order to be able to use this estimation method, an effective
sampler of the maximal, $(d{-}1)$-rank subsets of $M$ would have to be
devised first. This task seems to be an equally difficult and challenging
problem. With such a sampler our first attempt to create extremal rays of
$\mathcal C_6$ as described in Section \ref{subsec:first} would have been
significantly more efficient.

To get an estimate on the total number of extremal rays of $\mathcal C_6$ we
resort to an heuristic argument without theoretical guarantee. From the
existing pool of 260M essentially different rays, 1000 were chosen randomly
with the following restrictions: the ray was not used in the pool creation,
and the ray weight is at most 70. According to Figure \ref{fig:2} these
restrictions exclude about 15\% of rays. All neighbors of the chosen 1000
rays were computed, producing 2\;824\;119 extremal rays. These rays
determined 2\;797\;684 distinct orbits (99\%), of which 154\;170 (about
5.5\%) have already had a representative in the pool. Assuming that the pool
is a completely random subset of all orbits, this yields the estimate
260M\,/\,0.055${}\approx 4.7{\cdot}10^{9}$ for the total number of orbits.
From here, based on the weight distribution depicted on Figure \ref{fig:2},
the number of extremal rays is estimated to be $6.5{\cdot}10^{12}$.
Repeating this experiment gave similar results.

This estimate is clearly biased as the pool is far from random: it was
created by adding neighbors, in several stages, to a relatively small number
of rays. While the above figures are quite reasonable, we expect the actual
count to be significantly larger, closer to the lower estimate $10^{20}$
obtained in Section \ref{subsec:insertion-order}.

\section*{Acknowledgment}

The research reported in this paper has been partially supported by the ERC
Advanced Grant ERMiD.

\bibliography{refer}

\end{document}